\documentclass[11pt]{amsart}
\usepackage{amsmath,amssymb,amsthm}
\usepackage[margin=1.1in]{geometry}
\usepackage{mathtools}
\usepackage{enumitem}
\PassOptionsToPackage{hyphens}{url}
\usepackage[colorlinks=true,linkcolor=blue,citecolor=blue,urlcolor=blue]{hyperref}
\newtheorem{theorem}{Theorem}[section]

\newtheorem{proposition}[theorem]{Proposition}

\newtheorem{conjecture}[theorem]{Conjecture}
\theoremstyle{definition}
\newtheorem{definition}[theorem]{Definition}
\newtheorem{remark}[theorem]{Remark}

\newtheorem{target}{Target}

\newcommand{\N}{\mathbb{N}}
\newcommand{\C}{\mathbb{C}}
\newcommand{\R}{\mathbb{R}}
\newcommand{\Z}{\mathbb{Z}}
\newcommand{\Gr}{\operatorname{Gr}}
\newcommand{\Hom}{\operatorname{Hom}}
\newcommand{\dist}{\operatorname{dist}}
\newcommand{\prof}{\operatorname{prof}}
\newcommand{\Isom}{\operatorname{Isom}}
\newcommand{\GL}{\operatorname{GL}}
\newcommand{\sgn}{\operatorname{sgn}}
\newcommand{\vol}{\operatorname{vol}}
\newcommand{\In}[1]{\tbinom{\N}{#1}}
\DeclareMathOperator{\graph}{graph}

\title[Pl\"ucker coordinates in $\ell^p$]{Pl\"ucker coordinates of finite-dimensional subspaces of $\ell^p$ and its direct sums:\\ summability, reconstruction, stratification}
\author{David Victor Feldman}
\address{Department of Mathematics and Statistics, University of New Hampshire}
\date{\today}

\begin{document}

\begin{abstract}
An $n$-dimensional subspace of $\ell^p$ has Pl\"ucker coordinates indexed by the $n$-element subsets of $\N$. We show these coordinates lie in $\ell^p\In{n}$ --- the exponent is preserved --- with multilinear norm exactly $1$ for $0<p\le 2$; for $p>2$ the sharp constant exceeds $1$ and its determination contains the Hadamard maximal determinant problem. A reconstruction lemma shows every nonzero solution of the quadratic Pl\"ucker relations in $\ell^p\In{n}$ is decomposable with frame in $\ell^p$; consequently $\Gr_n(\ell^p)$ is a closed Banach-analytic submanifold of $\mathbb{P}\big(\ell^p\In{n}\big)$ cut out by the Pl\"ucker relations alone, with no auxiliary summability condition and no polarization. For mixed sums $\bigoplus \ell^{p_i}$ the exterior power is graded by compositions of $n$; the support of the grading is the lattice-point set of a generalized permutohedron determined by the intersection pattern of the subspace with partial sums, this stratification is canonical for the isometry group though not for $\GL$, and each stratum admits a tubular neighborhood whose normal coordinates are precisely the Pl\"ucker blocks vanishing on it. We record what is proved and what is conjectured; the finitary and single-space core of the theory, including full proofs of Cauchy--Binet and Hadamard's inequality, has been formally verified in Lean~4.
\end{abstract}

\maketitle
\setcounter{tocdepth}{1}
\tableofcontents

\section{Introduction}\label{sec:intro}

Fix $1\le p<\infty$ and $n\ge 1$. Given $v_1,\dots,v_n\in\ell^p=\ell^p(\N)$ and an $n$-element subset $I=\{i_1<\dots<i_n\}\subset\N$, put
\[
p_I \;=\; \det\big(v_r(i_s)\big)_{r,s=1}^{n},
\]
the Pl\"ucker coordinates of the frame, equivalently the coordinates of $v_1\wedge\dots\wedge v_n$ in the basis $e_{i_1}\wedge\dots\wedge e_{i_n}$ of the algebraic exterior power. We write $\mathcal{I}_n=\In{n}$ for the index set and $\tilde p_{i_1\cdots i_n}$ for the alternating extension of $(p_I)$ to ordered tuples (zero on repeated indices).

The first question is where the sequence $(p_I)_{I\in\mathcal I_n}$ lives, and the answer requires distinguishing two multilinear operations with opposite exponent behavior. The $n$-fold \emph{pointwise} (Hadamard) product maps $(\ell^p)^n\to\ell^{p/n}$ by H\"older; $p/n$ is the exponent of $n$-fold concavification in the Banach-lattice sense \cite{LT2}, and one might expect it to govern any degree-$n$ multilinear expression. But a determinant is a signed sum of monomials $\prod_r v_r(i_{\sigma(r)})$ in \emph{distinct} indices: it is a creature of the tensor product, not of the pointwise product, and for tensors the exponent is preserved:
\[
\sum_{i_1,\dots,i_n}\prod_{r=1}^n |v_r(i_r)|^p \;=\; \prod_{r=1}^n\|v_r\|_p^p .
\]
Antisymmetrization costs at most a constant depending on $n$ and $p$. The slogan organizing this note:

\begin{quote}
\emph{Concavification governs the diagonal of $X^{\otimes n}$; the Pl\"ucker map lives off the diagonal, where the correct functor is antisymmetrized tensoring, and the correct exponent for $\ell^p$ is $p$.}
\end{quote}

The distinction has teeth already at $p=n=2$: for $v_1=(1/k)$, $v_2=((-1)^k/k)$ in $\ell^2$, the coordinates $p_{ij}=\big((-1)^j-(-1)^i\big)/ij$ satisfy $\sum_{i<j}|p_{ij}|=\infty$ while $\sum_{i<j}|p_{ij}|^2<\infty$ by Cauchy--Binet. The exponent is $2$, not $1$.

Here is what the paper proves.

\emph{Summability, with sharp constant and sharp exponent} (Section~\ref{sec:norm}). For $v_1,\dots,v_n\in\ell^p$, $0<p<\infty$,
\[
\big\|(p_I)\big\|_{p}\ \le\ C_{n,p}\prod_{r=1}^n\|v_r\|_p ,
\]
with $C_{n,p}\le(n!)^{1/p'}$ for $p\ge1$ and $C_{n,p}=1$ throughout $0<p\le2$: by $p$-subadditivity for $p\le1$, by Cauchy--Binet and Hadamard's inequality at $p=2$, and by multilinear interpolation between. For $p>2$ the constant genuinely exceeds $1$ --- rows of a Hadamard matrix give $C_{n,p}\ge n^{n(1/2-1/p)}$ --- and determining it exactly contains the Hadamard maximal determinant problem, open since 1893. The exponent is sharp as well: for every $q<p$ some frame in $\ell^p$ has coordinates outside $\ell^q$.

\emph{Reconstruction} (Section~\ref{sec:recon}). The quadratic Pl\"ucker relations characterize the image: every nonzero solution $\omega\in\ell^p(\mathcal I_n)$ of the relations is decomposable, with a frame read off inside $\ell^p$ itself. Normalizing a nonvanishing coordinate $p_{J_0}=1$, the one-move coordinates $w_r(k)=\tilde p_{\,j_1\cdots k\cdots j_n}$ form the frame; they lie in $\ell^p$ for free, being (up to sign) subsequences of $\omega$. Solidity of the sequence space does the analysis; the algebra is a straightening induction valid over any commutative ring.

\emph{The Grassmannian} (Section~\ref{sec:recon}, continued). Consequently the Pl\"ucker image of $\Gr_n(\ell^p)$ in $\mathbb P\big(\ell^p(\mathcal I_n)\big)$ is exactly the projectivized zero cone of the relations --- no summability side condition, no rank condition --- closed in norm and weakly sequentially closed, and a Banach-analytic submanifold with graph charts modeled on $(\ell^p)^n$ whose transition maps are ratios of minors. Equivalently: $\Gr_n(\ell^p)$ is the $\ell^p$-completion of the classical ind-Grassmannian $\varinjlim_N\Gr_n(\C^N)$, each sequence space completing the same algebraic object in its own metric. Linear nondegeneracy (Section~\ref{sec:span}) and the functorial and duality structure of the coordinate norms (Section~\ref{sec:funct}) round out the single-space theory.

\emph{Mixed sums} (Sections~\ref{sec:sums}--\ref{sec:tubes}). For $X=\bigoplus_{i}\ell^{p_i}$ the exterior power is graded by compositions $\vec a$ of $n$, and which blocks of a subspace $V$ are nonzero is completely determined: the support is the set of lattice points of the generalized permutohedron cut out by the intersection dimensions $\dim\big(V\cap\bigoplus_{i\in T}\ell^{p_i}\big)$; for two summands, an integer interval pinned by two integers. The resulting stratification is invariant under the full isometry group of $X$, while $X$'s Grassmannian remains a single $\GL(X)$-orbit: coarse position is an isometric invariant living on a homogeneous space. And the polytope is metric reality, not just bookkeeping: each stratum has a tubular neighborhood whose normal coordinates are precisely the Pl\"ucker blocks vanishing on it, distance to the stratum is comparable to ratios of block norms, and the normal directions carry the exponents of the summands one moves into --- at $p_i\equiv2$ this collapses to the isotropic Hilbert picture, and the exponent matrix is the entire imprint of mixing.

\emph{Verification} (Section~\ref{sec:lean}). The finitary and single-space core of all of the above is formally verified in Lean~4 --- including, since Mathlib lacks them, complete proofs of Cauchy--Binet and of Hadamard's determinant inequality --- with exactly one classical input (finite multilinear Riesz--Thorin) declared as an explicit hypothesis rather than proved. The verification is load-bearing, not ornamental: this subject is determinantal bookkeeping of precisely the kind where plausible exponent slips thrive --- Remark~\ref{rem:bazin} documents one inside a determinantal identity of Bazin's from the classical literature --- and the formalization repaid the effort with better proofs, recorded in place throughout.

Section~\ref{sec:ext} records endpoints ($c_0$, quasi-Banach $0<p<1$, general symmetric sequence spaces) and the conjectures; Section~\ref{sec:open} lists open problems.

Epistemic bookkeeping is strict throughout: statements labeled Theorem/Proposition/Lemma have complete proofs or complete proof reductions given here; statements labeled Conjecture do not, however plausible.

\emph{Relation to the literature.} Continuity of the wedge into tensor scales is implicit in the theory of tensor norms \cite{Ryan,DF}; infinite-dimensional Grassmannians are classically developed in the Hilbert and Sato settings \cite{PS,SW}, where one studies \emph{semi-infinite} planes relative to a polarization and imposes Fredholm or Hilbert--Schmidt conditions. The present theory is a finite-$n$ theory: no polarization enters, and (Remark~\ref{rem:ind}) $\Gr_n(\ell^p)$ is precisely the $\ell^p$-completion of the classical ind-Grassmannian $\varinjlim_N \Gr_n(\C^N)$. I have not located the sharp constant of Theorem~\ref{thm:main-norm}, the reconstruction lemma in the $\ell^p$ setting, or the mixed-sum stratification in print; folklore status for the crude exponent statement would not surprise me, and I would welcome references.

\section{The summability theorem and sharp constants}\label{sec:norm}

\begin{theorem}[$\ell^p$-Hadamard inequality]\label{thm:main-norm}
Let $0<p<\infty$ and $v_1,\dots,v_n\in\ell^p$. Then $(p_I)_{I\in\mathcal I_n}\in\ell^p(\mathcal I_n)$, and:
\begin{enumerate}[label=\textup{(\alph*)}]
\item\label{it:crude} For $1\le p<\infty$,
$\ \big\|(p_I)\big\|_{p}\le (n!)^{1/p'}\prod_{r=1}^n\|v_r\|_p$, where $1/p+1/p'=1$.
\item\label{it:one} For $0<p\le 1$ and for $p=2$, the constant is exactly $1$:
$\ \big\|(p_I)\big\|_{p}\le \prod_{r=1}^n\|v_r\|_p$, with equality at $v_r=e_r$ (any $p$), and at orthonormal frames for $p=2$.
\item\label{it:interp} For $1\le p\le 2$ the constant is exactly $1$.
\item\label{it:sup} For $2<p<\infty$ the optimal constant $C_{n,p}$ satisfies
$\ C_{n,p}\ \ge\ n^{\,n\left(\frac12-\frac1p\right)} \,>\,1$
whenever a Hadamard matrix of order $n$ exists; in general $C_{n,p}\ge h_n\, n^{-n/p}$ where $h_n$ is the maximal determinant of an $n\times n$ $\pm1$ matrix.
\item\label{it:sharp-exp} The exponent is sharp: for every $q<p$ there exist $v_1,\dots,v_n\in\ell^p$ with $(p_I)\notin\ell^q(\mathcal I_n)$.
\end{enumerate}
\end{theorem}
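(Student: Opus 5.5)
The plan is to handle the five parts separately, working from the tensor identity in the introduction and two classical inputs: Cauchy--Binet and Hadamard's inequality.

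\emph{Part (a).} Write $p_I=\sum_{\sigma\in S_n}\sgn(\sigma)\prod_r v_r(i_{\sigma(r)})$. Applying H\"older to this sum of $n!$ terms gives
\[
|p_I|^p\le (n!)^{p-1}\sum_\sigma\prod_r|v_r(i_{\sigma(r)})|^p .
\]
Now sum over $I\in\mathcal I_n$. The pairs $(I,\sigma)$ biject with ordered tuples of distinct indices, so the double sum is at most $\sum_{i_1,\dots,i_n}\prod_r|v_r(i_r)|^p=\prod_r\|v_r\|_p^p$. Taking $p$-th roots gives the constant $(n!)^{(p-1)/p}=(n!)^{1/p'}$.

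\emph{Part (b).} For $0<p\le1$, use $p$-subadditivity $|\sum a|^p\le\sum|a|^p$ in place of H\"older. The same bijection then gives constant $1$.

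For $p=2$, Cauchy--Binet gives
\[
\sum_I|p_I|^2=\det(\langle v_r,v_s\rangle),
\]
the Gram determinant; a finite truncation plus a limit makes this legitimate in $\ell^2$. Hadamard's inequality applied to the positive semidefinite Gram matrix bounds this by $\prod\|v_r\|_2^2$.

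For the equality cases: with $v_r=e_r$ the only nonzero coordinate is $p_{\{1,\dots,n\}}=1$. For an orthonormal frame the Gram matrix is the identity.

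\emph{Part (c).} The map $(v_1,\dots,v_n)\mapsto(p_I)$ is multilinear. It is bounded with norm $\le1$ on $(\ell^1)^n\to\ell^1$ and on $(\ell^2)^n\to\ell^2$. Multilinear complex Riesz--Thorin, applied first on finitely supported sequences with $1/p=(1-\theta)+\theta/2$, gives norm $\le1$ from $(\ell^p)^n$ to $\ell^p$; density then extends this to all of $(\ell^p)^n$. The constant is at least $1$ because of $v_r=e_r$. The delicate points are the complex-scalar setting and the truncation; this interpolation input is the step I treat as imported.

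\emph{Part (d).} Let $H$ be an $n\times n$ matrix with entries $\pm1$ and $|\det H|=h_n$. Take $v_r$ to be row $r$ of $H$, supported on the coordinates $\{1,\dots,n\}$. Then $\|v_r\|_p=n^{1/p}$, and the only nonzero Pl\"ucker coordinate is $\det H$. Hence
\[
C_{n,p}\ge h_n\,n^{-n/p}.
\]
For a Hadamard matrix $h_n=n^{n/2}$, which gives $n^{n(1/2-1/p)}>1$ when $p>2$ and $n\ge2$. For $n=1$ the bound reads $C\ge1$; the strict inequality is meant for $n\ge2$, and I will note this.

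\emph{Part (e).} Choose $v_1\in\ell^p\setminus\ell^q$ supported on a set $A$, and take $v_r=e_{r-1}$ for $r\ge2$, with $A$ disjoint from $\{1,\dots,n-1\}$. For $k\in A$ and $I=\{1,\dots,n-1,k\}$, the coordinate $p_I$ is, up to sign, $v_1(k)$. The coordinates therefore contain a copy of $v_1$ and cannot lie in $\ell^q$.

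I expect the main obstacle to be part (c), specifically verifying that multilinear interpolation applies cleanly, including the quasi-norm issues and the passage to complex scalars. The remaining parts are direct computations.
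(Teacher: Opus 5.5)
Your proposal is correct and matches the paper's proof essentially step for step. It uses the same H\"older (resp.\ $p$-subadditivity) bound with the bijection between pairs $(I,\sigma)$ and injective tuples, Cauchy--Binet plus Hadamard at $p=2$, multilinear complex interpolation between the $\ell^1$ and $\ell^2$ endpoints for (c), and rows of a $\pm1$ matrix for (d). The one difference is in (e): your witness, a single vector of $\ell^p\setminus\ell^q$ padded by $e_1,\dots,e_{n-1}$, is simpler than the paper's alternating-sign two-vector example (also padded by basis vectors) and suffices for the statement. Your caveat that the strict inequality in (d) needs $n\ge2$ is also right, since a Hadamard matrix of order $1$ exists while $C_{1,p}=1$.
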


\begin{proof}
\ref{it:crude}: Expand $p_I=\sum_{\sigma\in S_n}\sgn(\sigma)\prod_r v_r(i_{\sigma(r)})$ and apply the power-mean (H\"older) inequality to the $n!$ terms:
$|p_I|^p\le (n!)^{p-1}\sum_\sigma\prod_r|v_r(i_{\sigma(r)})|^p$. Summing over $I$, the pairs $(I,\sigma)$ enumerate the ordered $n$-tuples of \emph{distinct} indices --- bijectively: a tuple determines $I$ as its image and $\sigma$ as the inverse of its sorting permutation --- and these form a subset of all tuples, so
\[
\sum_I|p_I|^p\ \le\ (n!)^{p-1}\sum_{i_1,\dots,i_n}\prod_r|v_r(i_r)|^p\ =\ (n!)^{p-1}\prod_r\|v_r\|_p^p .
\]
\ref{it:one}: For $0<p\le1$ use $p$-subadditivity $|a+b|^p\le|a|^p+|b|^p$ in place of H\"older; the factor $(n!)^{p-1}$ disappears. For $p=2$, Cauchy--Binet gives the identity
\begin{equation}\label{eq:CB}
\sum_{I}|p_I|^2\;=\;\det\big(\langle v_r,v_s\rangle\big)_{r,s=1}^n,
\end{equation}
and Hadamard's inequality bounds the Gram determinant by $\prod\|v_r\|_2^2$. Hadamard, in the form needed --- a positive semidefinite $M$ satisfies $\det M\le\prod_i M_{ii}$ --- has a proof with no induction on dimension: factor $M=B^*B$; if some $M_{ii}=0$, the $i$-th column of $B$ vanishes and $\det M=0$; otherwise conjugate by $\operatorname{diag}\big(M_{ii}^{-1/2}\big)$ to reach a positive semidefinite matrix with unit diagonal, whose eigenvalues are nonnegative with sum $n$ (the trace), so their product --- the determinant --- is at most $1$ by the inequality of arithmetic and geometric means. Equality cases are immediate.

\ref{it:interp}: The map $(v_1,\dots,v_n)\mapsto(p_I)$ is multilinear with norm $\le1$ at the endpoints $(\ell^1)^n\to\ell^1(\mathcal I_n)$ and $(\ell^2)^n\to\ell^2(\mathcal I_n)$ by \ref{it:one}. Multilinear complex interpolation \cite[Thm.~4.4.1]{BL} between these endpoints, with $[\ell^1,\ell^2]_\theta=\ell^p$ on both source and target index sets, gives norm $\le1$ for all intermediate $p$. Equality at $v_r=e_r$ shows the constant is exactly $1$.

\ref{it:sup}: Let $H$ be an $n\times n$ matrix with entries $\pm1$ and $|\det H|=h_n$; take $v_r\in\ell^p$ to be the $r$-th row of $H$ extended by zeros. Then $\|v_r\|_p=n^{1/p}$ and the single nonzero Pl\"ucker coordinate is $p_{\{1,\dots,n\}}=\det H$, whence $C_{n,p}\ge h_n\,n^{-n/p}$. When a Hadamard matrix of order $n$ exists, $h_n=n^{n/2}$; Sylvester's doubling supplies one for every $n=2^k$, with $WW^{\mathsf T}=nI$ and hence $(\det W)^2=n^n$.

\ref{it:sharp-exp}: For $n=2$ take $v_1=(k^{-\alpha})_k$, $v_2=((-1)^k k^{-\alpha})_k$ with $\alpha\in(1/p,\,1/q\,]$. Then $v_1,v_2\in\ell^p$, and $p_{ij}=\big((-1)^j-(-1)^i\big)(ij)^{-\alpha}$ has modulus $2(ij)^{-\alpha}$ on opposite-parity pairs, so $\sum_{i<j}|p_{ij}|^q\gtrsim\sum_j j^{-q\alpha}=\infty$ since $q\alpha\le1$. For general $n$, support $v_1,v_2$ on indices $>n-2$ as above and set $v_r=e_{r-2}$ for $3\le r\le n$; then $p_{\{1,\dots,n-2\}\cup\{i,j\}}=\pm\,p^{(2)}_{ij}$ and non-membership in $\ell^q$ persists.
\end{proof}

\begin{remark}[The supercritical regime]\label{rem:maxdet}
By \ref{it:sup} the sharp-constant problem for $p=\infty$-adjacent exponents contains the Hadamard maximal determinant problem \cite{Had}, open since 1893; exact constants for $p>2$ are therefore out of reach in general. The qualitative picture: $C_{n,p}=1$ on $(0,2]$, and $C_{n,p}>1$ immediately above $2$. I do not know whether $p\mapsto C_{n,p}$ is continuous at $2^+$, nor the growth rate in $p$ for fixed $n$ beyond the bounds implicit in \ref{it:crude} and \ref{it:sup}. See Problem~\ref{prob:const}.
\end{remark}

\begin{remark}[Which norm]\label{rem:whichnorm}
Theorem~\ref{thm:main-norm} concerns the \emph{coordinate} $\ell^p$-norm on $\Lambda^n\ell^p$. This is a reasonable crossnorm (Section~\ref{sec:funct}) but coincides with the projective tensor/exterior norm only at $p=1$, where $\ell^1\widehat\otimes_\pi\ell^1=\ell^1(\N^2)$ isometrically \cite[\S2.3]{Ryan}. At $p=2$ the coordinate norm is the Hilbert--Schmidt norm while the projective norm is the trace-class norm; they differ. The distinguished feature of $p=2$ is self-duality of the coordinate norm, not agreement with $\pi$. Any statement below asserting a norm names the coordinate norm unless said otherwise.
\end{remark}

\section{Reconstruction, closedness, manifold structure}\label{sec:recon}

The quadratic Pl\"ucker relations are, for each $(n-1)$-tuple $R$ and $(n+1)$-tuple $S$ of indices,
\begin{equation}\label{eq:plucker}
\sum_{t=1}^{n+1}(-1)^t\,\tilde p_{\,R\,s_t}\;\tilde p_{\,s_1\cdots \widehat{s_t}\cdots s_{n+1}}\;=\;0 .
\end{equation}
Each relation involves finitely many coordinates. Write $C\subset\ell^p(\mathcal I_n)$ for the common zero set (``the cone'').

\begin{theorem}[Reconstruction]\label{thm:recon}
Let $0<p<\infty$ or $p=c_0$ \textup{(}see \S\ref{sec:ext}\textup{)}. Let $\omega=(p_I)\in\ell^p(\mathcal I_n)$, $\omega\ne0$, satisfy \eqref{eq:plucker}. Then $\omega$ is decomposable with frame in $\ell^p$: there exist linearly independent $w_1,\dots,w_n\in\ell^p$ with $p_I=\det(w_r(i_s))$ for all $I$. Explicitly, if $p_{J_0}=1$ for $J_0=\{j_1<\dots<j_n\}$, one may take
\[
w_r(k)\;=\;\tilde p_{\,j_1\cdots j_{r-1}\,k\,j_{r+1}\cdots j_n}\qquad(k\in\N),
\]
and then $w_r(j_s)=\delta_{rs}$.
\end{theorem}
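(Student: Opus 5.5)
Since $\omega\ne0$, some coordinate $p_{J_0}$ is nonzero. Scaling by $1/p_{J_0}$ changes neither \eqref{eq:plucker} (homogeneous quadratic) nor decomposability, so we may assume $p_{J_0}=1$.

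\textbf{Analytic part.} Define $w_r$ as in the statement. For fixed $r$, the map $k\mapsto \tilde p_{j_1\cdots k\cdots j_n}$ vanishes when $k\in J_0\setminus\{j_r\}$ (repeated index). For every other $k$ it equals $\pm p_{I(k)}$, where $I(k)=(J_0\setminus\{j_r\})\cup\{k\}$, and $k\mapsto I(k)$ is injective. Hence $|w_r|$ is dominated termwise by a reindexed subsequence of $|\omega|$, and solidity gives $\|w_r\|_p\le\|\omega\|_p$. The same argument works for $c_0$. Next, $w_r(j_s)=\tilde p_{j_1\cdots j_s\cdots j_n}$ with $j_s$ placed in slot $r$. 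This is $0$ if $s\ne r$ (repeated index) and $p_{J_0}=1$ if $s=r$. So $w_r(j_s)=\delta_{rs}$, and linear independence follows at once. No other analysis is needed.

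\textbf{Algebraic part.} Put $q_I=\det(w_r(i_s))$. We must show $q_I=p_I$ for every $I$. Each instance involves only finitely many indices, namely $I\cup J_0$, so this is the classical finite-dimensional fact: a solution of the Plücker relations normalized at $J_0$ equals the Plücker vector of its one-move frame. I would prove it by induction on $d(I)=|I\setminus J_0|$, over any commutative ring.

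1. For $d=0$, both sides equal $1$.
2. For $d=1$, expanding $q_I$ using $w_r(j_s)=\delta_{rs}$ gives a single entry $w_r(k)$, which matches $p_I$ by the definition of $w_r$ (check signs via the alternating extension).
3. For $d\ge2$, pick $k\in I\setminus J_0$. Apply \eqref{eq:plucker} with $R=I\setminus\{k\}$ (an $(n-1)$-tuple) and $S=(k,j_1,\dots,j_n)$. The $t=1$ term is $\pm\tilde p_{I}\,p_{J_0}=\pm p_I$. Every other term is $\pm\tilde p_{R\,j_t}\,\tilde p_{k\,J_0\setminus j_t}$. Its factors have $d(R\cup\{j_t\})\le d-1$ and $d(\{k\}\cup J_0\setminus j_t)=1$, so both are known by induction.
4. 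This expresses $p_I$ as a polynomial in lower coordinates. The $q_I$ are genuine minors, so they satisfy the same relation (Plücker relations for minors are a polynomial identity, namely Laplace expansion of a vanishing $(n+1)\times(n+1)$ determinant with a repeated row block). Since $q$ agrees with $p$ on all lower terms, $q_I=p_I$.

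The main obstacle is sign bookkeeping in the one-move coordinates and the relation. I would handle it by working throughout with the alternating $\tilde p$ and ordered tuples, never sorting until the end, and by checking the $d=1$ case directly against the cofactor expansion. Step 3 also needs care in the degenerate terms of the relation where $j_t\in R$: there the repeated index makes the term vanish, which is consistent.
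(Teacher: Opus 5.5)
Your proof is correct and is essentially the paper's: you use the same subsequence argument for membership, and the same straightening induction on the foreign count with exactly the instance $R=I\setminus\{k\}$, $S=(k,j_1,\dots,j_n)$ of \eqref{eq:plucker}. The only difference is that the paper checks the resulting recursion for $D(t)=\det\big(w_r(t_s)\big)$ directly as a Laplace expansion along the foreign column, which also absorbs your separate $d=1$ check into the induction, whereas you invoke the classical fact that minors satisfy the Pl\"ucker relations.
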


\begin{proof}
\emph{Membership.} For fixed $r$, as $k$ ranges over $\N\setminus J_0$ the values $w_r(k)=\pm\,p_{(J_0\setminus\{j_r\})\cup\{k\}}$ are coordinates of $\omega$ at pairwise distinct indices, so $\sum_{k}|w_r(k)|^p\le n+\|\omega\|_p^p<\infty$ (the finitely many $k\in J_0$ contribute $\delta_{rs}$). Thus $w_r\in\ell^p$; independence is immediate from $w_r(j_s)=\delta_{rs}$.

\emph{Identity, by straightening.} Write $D(t)=\det\big(w_r(t_s)\big)$ for tuples $t$. Both $\tilde p_t$ and $D(t)$ are alternating in $t$ and vanish when $t$ repeats an index (a repeat gives two equal columns), so it suffices to prove $\tilde p_t=D(t)$ for injective $t$; induct on the \emph{foreign count} $m(t)$, the number of entries of $t$ outside $J_0$. Two preliminaries: $j$ is injective (else $p_{J_0}=0$), and $w_r(j_s)=\delta_{rs}$ --- for $r=s$ the defining tuple is $J_0$ itself, for $r\ne s$ it repeats $j_s$.

If $m(t)=0$ then $t$ permutes $J_0$ and both sides equal the sign of that permutation ($D$ becomes the determinant of a permutation matrix). If $m(t)\ge1$, fix a slot $s^*$ with $t_{s^*}\notin J_0$ and instantiate \eqref{eq:plucker} with $R=t$ minus slot $s^*$ and $S=(t_{s^*},j_1,\dots,j_n)$. Tracking the cycles that move $t_{s^*}$ to its slot, every term of the relation acquires one and the same overall sign, which cancels, leaving the exchange identity
\[
(\star)\qquad \tilde p_{\,t}\;=\;\sum_{r=1}^{n} w_r(t_{s^*})\ \tilde p_{\,t[s^*\!\to j_r]},
\]
where $t[s^*\!\to j_r]$ replaces the entry at slot $s^*$ by $j_r$. Each tuple on the right either repeats an index --- then both $\tilde p$ and $D$ vanish on it --- or is injective with foreign count $m(t)-1$, where the induction hypothesis applies. It therefore suffices that $D$ obeys the same recursion $(\star)$; but for $D$, $(\star)$ \emph{is} the Laplace expansion of $\det\big(w_r(t_s)\big)$ along column $s^*$: since $w_a(j_r)=\delta_{ar}$, replacing $t_{s^*}$ by $j_r$ replaces that column by the standard basis vector $e_r$, so the determinants on the right are the cofactors of column $s^*$, whose entries are the $w_r(t_{s^*})$.

The argument uses only the relations and alternation, is valid over any commutative ring, and needs no truncation; the classical coordinate-ring facts for finite Grassmannians \cite[\S9]{Fulton} are consequences here, not inputs.
\end{proof}

\begin{remark}[Bazin's identity; a correction]\label{rem:bazin}
Without the normalization $p_{J_0}=1$, the reconstruction of Theorem~\ref{thm:recon} is expressed by Bazin's determinantal identity (a Sylvester-type identity; see \cite{BS} and \cite[Ch.~V]{Muir}):
\[
p_{J_0}^{\,n-1}\;\tilde p_{\,i_1\cdots i_n}\;=\;\det\Big(\tilde p_{\,(J_0\setminus\{j_r\})\cup\{i_s\}}\Big)_{r,s=1}^{n},
\]
with the convention that the entry carries the sign of inserting $i_s$ into slot $r$. The identity has a two-line proof that locates the exponent structurally. For coordinates of a frame, set $A=\big(v_a(j_s)\big)$ and $B=\big(v_a(i_s)\big)$. Cramer's rule identifies the entry $\tilde p_{\,(J_0\setminus\{j_r\})\cup\{i_s\}}$ --- the determinant of $A$ with column $r$ replaced by $\big(v_a(i_s)\big)_a$ --- as the $(r,s)$ entry of $\operatorname{adj}(A)\,B$, with no transpose or sign adjustment: the alternating extension supplies exactly the cofactor signs. Hence the right side equals $\det(\operatorname{adj}A)\det B=(\det A)^{n-1}\det B=p_{J_0}^{\,n-1}\,\tilde p_I$. For abstract solutions of \eqref{eq:plucker}, normalize at a nonvanishing coordinate, apply Theorem~\ref{thm:recon}, and use homogeneity of degree $n$ on both sides. The exponent $n-1$ is thus the exponent in $\det(\operatorname{adj}A)=(\det A)^{n-1}$; the degree count --- both sides have degree $n$ in the coordinates --- is the consistency check, not the reason. The tempting unpowered variant $p_{J_0}\,\tilde p_I=\det(\cdots)$ is correct only for $n=2$, where the three-term relation makes it easy to ``verify'' and then extrapolate; it is refuted by an explicit integer witness in dimension $3$, machine-checked in the verified development (\S\ref{sec:lean}): a frame with $p_{J_0}=-14$ and $\tilde p_I=1$ for which the determinant on the right is $196=(-14)^2$, not $-14$. Under the normalization $p_{J_0}=1$ the discrepancy is invisible, so informal arguments run on the special case can pass over the error --- a representative example of the kind of slip mechanical verification exists to catch, and the reason the formalization (\S\ref{sec:lean}) treated Bazin as an acceptance test rather than an input, deriving it from reconstruction.
\end{remark}

\begin{remark}[Why exponent $p$ is the self-consistent one]\label{rem:selfcon}
The proof of membership uses only that the contractions $w_r$ are \emph{subsequences} of $\omega$; solidity of $\ell^p$ then puts them back in $\ell^p$. Had the coordinates lain in $\ell^{p/n}$, contraction would land frames in $\ell^{p/n}\subsetneq\ell^p$ and the theory would not close on itself. Exponent $p$ is exactly what makes the contraction $\Lambda^n\ell^p\otimes\Lambda^{n-1}(\ell^p)^*\to\ell^p$ bounded and the Grassmannian reconstructible inside its own space.
\end{remark}

\begin{theorem}[Closedness, weak closedness, manifold structure]\label{thm:manifold}
Let $1\le p<\infty$.
\begin{enumerate}[label=\textup{(\alph*)}]
\item The cone $C$ is norm-closed and weakly sequentially closed in $\ell^p(\mathcal I_n)$, and the Pl\"ucker image of $\Gr_n(\ell^p)$ in $\mathbb P\big(\ell^p(\mathcal I_n)\big)$ is closed and equals $\big(C\setminus\{0\}\big)/\C^\times$. No summability side condition beyond membership in the ambient space appears.
\item For each $J_0$, the chart $U_{J_0}=\{V:\ V\ \text{is the graph over }E_{J_0}=\operatorname{span}(e_{j_r})\}$ satisfies
$U_{J_0}\cong\mathcal L\big(\C^n,\ \ell^p(\N\setminus J_0)\big)\cong \big(\ell^p\big)^n$,
the chart entries being the first-order coordinates $T_{k,r}=\pm\,p_{(J_0\setminus\{j_r\})\cup\{k\}}/p_{J_0}$; transition maps are ratios of minors, analytic where defined.
\item $\Gr_n(\ell^p)$ is a Banach-analytic manifold modeled on $(\ell^p)^n$, and the Pl\"ucker map is a closed analytic embedding whose differential has complemented image \textup{(}a coordinate subspace\textup{)} at every point.
\end{enumerate}
\end{theorem}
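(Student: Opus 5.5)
For (a), I will first show that $C$ is norm-closed and weakly sequentially closed. The key is that each coordinate functional $\omega\mapsto p_I$ is bounded on $\ell^p(\mathcal I_n)$. Each relation \eqref{eq:plucker} is therefore a continuous quadratic polynomial in finitely many coordinates, and $C$, as an intersection of zero sets of such polynomials, is norm-closed. For weak sequential closedness I will use that for $1<p<\infty$ a weakly convergent sequence converges coordinatewise, since the coordinate functionals lie in the dual. For $p=1$ the coordinate functionals lie in $\ell^\infty$, so weak convergence again gives coordinatewise convergence. Because each relation involves only finitely many coordinates, coordinatewise limits preserve \eqref{eq:plucker}, and this finite-support feature is the whole point.

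Next I will identify the image. If $V=\operatorname{span}(v_r)$, Theorem~\ref{thm:main-norm} puts $(p_I)$ in $\ell^p(\mathcal I_n)$. The coordinates satisfy \eqref{eq:plucker}, because these are finite determinantal identities (Laplace/Sylvester) among minors of a finite-column submatrix. The vector $(p_I)$ is nonzero because $n$ independent vectors have some nonvanishing $n\times n$ minor. Conversely, Theorem~\ref{thm:recon} produces a frame in $\ell^p$ for every nonzero $\omega\in C$. Injectivity on $\Gr_n$ is the standard fact that the wedge determines the span: $V=\{x: x\wedge\omega=0\}$, checked coordinatewise. Closedness in $\mathbb P$ then follows because the quotient map from the unit sphere is open, and $C\setminus\{0\}$ is closed in $\ell^p(\mathcal I_n)\setminus\{0\}$ and saturated under $\C^\times$.

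For (b), if $V\in U_{J_0}$ I will take the unique frame with $w_r(j_s)=\delta_{rs}$. This is possible because $p_{J_0}\ne0$ is exactly the condition that the restriction $V\to E_{J_0}$ is an isomorphism. I then set $T e_r=w_r|_{\N\setminus J_0}\in\ell^p(\N\setminus J_0)$. Theorem~\ref{thm:recon}, after normalizing $p_{J_0}$, identifies these entries as $\pm p_{(J_0\setminus\{j_r\})\cup\{k\}}/p_{J_0}$. Conversely, every $T\in(\ell^p)^n$ gives a graph in $U_{J_0}$, so the chart is a bijection onto $(\ell^p)^n$. For a transition to $U_{J_1}$, the new frame is $w\cdot A^{-1}$, where $A=(w_r(j'_s))$ is the $n\times n$ matrix on $J_1$. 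Its entries are ratios of minors, namely Cramer's rule applied to $p_{J_1}/p_{J_0}$, and the inverse is rational in finitely many chart entries. The map $T\mapsto T A(T)^{-1}$ is then analytic on $\{\det A\ne0\}$, because $T\mapsto A(T)$ is a bounded linear map to a finite matrix and inversion is analytic.

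For (c), I will use the charts to give the manifold structure. In a chart the Pl\"ucker map is $T\mapsto\big(\det((\mathrm{Id}\oplus T)$-minors$)\big)_I$, and it is analytic in $T$. It is a sum over degrees $d\le n$ of bounded $d$-linear maps, since the minor involving $d$ foreign indices is a $d\times d$ minor of $T$ and is bounded into $\ell^p$ by Theorem~\ref{thm:main-norm}. The differential at $T$ contains, in the first-order coordinates $I=(J_0\setminus\{j_r\})\cup\{k\}$, the identity map onto the entries of $T$. Checking this first at $T=0$ and then transporting by the transitive linear action, or by recentring the chart, shows that the differential is injective with image complemented by the coordinate subspace of all other indices. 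Together with the injectivity and closedness from (a), this gives a closed analytic embedding. I expect the main obstacle to be this last complementation and homeomorphism-onto-image step at a general point. There I must check that the inverse, $\omega\mapsto$ first-order coordinates divided by $p_{J_0}$, is continuous on the image. I will get that continuity from boundedness of coordinate projections, which makes it essentially free.
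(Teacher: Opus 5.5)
Your proposal is correct and follows essentially the same route as the paper: coordinatewise passage to the limit in the finitely-supported relations for (a), with Theorem~\ref{thm:recon} identifying the image; graph charts with one-move coordinates and Cramer-rule transitions for (b); and, for (c), polynomiality of the chart map via Theorem~\ref{thm:main-norm}, with the normalized one-move coordinate projection as the continuous inverse. One simplification: the one-move coordinates equal $\pm T_{k,r}$ exactly at every $T$ in the chart, so the coordinate projection onto them composed with the differential is the identity at every point, and complementation by the coordinate subspace of the remaining indices holds directly everywhere, with no transport or recentring from $T=0$ needed.
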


\begin{proof}
(a) Each relation \eqref{eq:plucker} is a polynomial in finitely many coordinates, and coordinate evaluation is a continuous linear functional of norm at most one; norm convergence, and weak convergence tested against these functionals, both give coordinatewise convergence, under which every relation passes to the limit. So $C$ is norm-closed and weakly sequentially closed. If $[\omega_m]\to[\eta]$ in $\mathbb P$ with $\omega_m\in C$ decomposable, then $\eta\in C\setminus\{0\}$, so some $\eta_{J_0}\ne0$ and Theorem~\ref{thm:recon} exhibits $\eta$ as decomposable with frame in $\ell^p$. The degenerations one fears --- frame collapse, mass escaping to infinity --- all drive $\omega\to0$ in the cone, and the origin is invisible projectively.

(b) A subspace $V$ with $p_{J_0}(V)\ne0$ meets $\ell^p(\N\setminus J_0)$ trivially and projects onto $E_{J_0}$, hence is the graph of a unique linear $T:E_{J_0}\to\ell^p(\N\setminus J_0)$, automatically bounded since $\dim E_{J_0}<\infty$. Each $p_I$ is a minor of $[\,\mathbb 1\mid T\,]$, a polynomial in the entries of $T$ converging coordinatewise with $\ell^p$ control by Theorem~\ref{thm:main-norm}; ratios of minors give the transitions.

(c) Combine (a), (b); the differential at $T=0$ is the identity onto the coordinate subspace of ``one-move'' indices $(J_0\setminus\{j_r\})\cup\{k\}$, complemented by the coordinate projection; Theorem~\ref{thm:recon} supplies the continuous inverse (reconstruction is coordinatewise linear in $\omega$ after normalization), so the map is a topological embedding onto its closed image.
\end{proof}

\begin{proposition}[Local bi-Lipschitz comparison]\label{prop:bilip}
Fix $n$ and $\delta\in(0,1]$. On $\{V:\ |p_{J_0}(V)|\ge\delta\,\|\omega(V)\|_p\}$, the gap metric \cite[IV.\S2]{Kato} on $\Gr_n(\ell^p)$ and the normalized coordinate metric $\big\|\omega(V)/p_{J_0}(V)-\omega(V')/p_{J_0}(V')\big\|_p$ are comparable, with constants depending only on $n,\delta$.
\end{proposition}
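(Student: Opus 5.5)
The plan is to pass through the graph chart of Theorem~\ref{thm:manifold}(b), comparing both metrics to the operator-norm distance $\|T-T'\|$ between the graph maps $T,T':E_{J_0}\to\ell^p(\N\setminus J_0)$ of $V,V'$. Write $\hat\omega(V)=\omega(V)/p_{J_0}(V)$; this is the Pl\"ucker vector of the normalized frame $[\,\mathbb 1\mid T\,]$, i.e.\ $w_r=e_{j_r}+Te_{j_r}$ as in Theorem~\ref{thm:recon}. The hypothesis $|p_{J_0}|\ge\delta\|\omega\|_p$ says exactly that $\|\hat\omega(V)\|_p\le1/\delta$. Since the one-move coordinates of $\hat\omega$ are (up to sign) the entries of $T$, this gives $\|T\|\lesssim_{n}1/\delta$, and conversely Theorem~\ref{thm:main-norm} bounds $\|\hat\omega\|_p$ by $C_{n,p}\prod_r\|w_r\|_p\lesssim(1+\|T\|)^n$. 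So the region in question is a bounded set in the chart, and the claim reduces to showing that both metrics are bi-Lipschitz to $\|T-T'\|$ on bounded sets of $T$.

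\textbf{Coordinate metric versus $\|T-T'\|$.} For the lower bound, restrict $\hat\omega-\hat\omega'$ to the one-move indices $(J_0\setminus\{j_r\})\cup\{k\}$. These are pairwise distinct, so $\|\hat\omega-\hat\omega'\|_p\ge(\sum_r\|(T-T')e_{j_r}\|_p^p)^{1/p}\gtrsim_n\|T-T'\|$. For the upper bound, telescope the multilinear Pl\"ucker map, $\hat\omega-\hat\omega'=\sum_r w_1\wedge\cdots\wedge(w_r-w'_r)\wedge w'_{r+1}\wedge\cdots$, and apply Theorem~\ref{thm:main-norm}(a) or (d) to each term. This gives $\|\hat\omega-\hat\omega'\|_p\le n\,C_{n,p}(1+\max(\|T\|,\|T'\|))^{n-1}\|T-T'\|$, which is controlled by $\delta$ on the region.

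\textbf{Gap metric versus $\|T-T'\|$.} This is the step I expect to be the main obstacle, because the gap $\hat\delta(V,V')$ involves unit vectors of $V$ and distances to $V'$ in the $\ell^p$ norm, with no orthogonal projections available. For the upper bound, a unit $x\in V$ is $x=u+Tu$ with $u\in E_{J_0}$, and $\|u\|\le\|x\|$ because the coordinate projection onto $J_0$ has norm $1$ in $\ell^p$. The vector $u+T'u\in V'$ then satisfies $\|x-(u+T'u)\|\le\|T-T'\|$. For the lower bound, pick $u$ with $\|u\|=1$ and $\|(T-T')u\|\approx\|T-T'\|$, and take $x=(u+Tu)/\|u+Tu\|$, noting $\|u+Tu\|\le1+\|T\|$. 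For any $y=u'+T'u'\in V'$, split $x-y$ into its $J_0$-part and its complement. If $\|u/\|u+Tu\|-u'\|$ is small, the complement part is close to $(T-T')u/\|u+Tu\|$ up to an error $\|T'\|\cdot\|u/\|u+Tu\|-u'\|$. Balancing the two cases gives $\dist(x,V')\gtrsim\|T-T'\|/(1+\|T\|)(1+\|T'\|)$. I would finally check that constants depending on $p$ (through $C_{n,p}$ and the $\ell^p$ equivalence of $n$-term sums) are uniform or absorbed; if they are not, I would state them as depending on $n,p,\delta$.
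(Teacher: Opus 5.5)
Your proposal is correct and takes the paper's route: both proofs compare each metric to $\|T-T'\|$ in the graph chart. On the coordinate side you use the one-move coordinates (linearity of reconstruction) and the polynomial dependence of minors on $T$, and on the other side the standard comparison of gap with graph-map difference; you additionally supply the quantitative details the paper leaves implicit, namely boundedness of the region in the chart, the telescoping Lipschitz bound via Theorem~\ref{thm:main-norm}, and the explicit two-sided gap estimates. Your residual worry about $p$-dependence is unnecessary for $1\le p<\infty$: $C_{n,p}\le (n!)^{1/p'}\le n!$ and $n^{1-1/p}\le n$, so the constants depend only on $n$ and $\delta$, as stated.
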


\begin{proof}
One direction is the polynomial dependence of minors on the graph entries (Theorem~\ref{thm:manifold}(b)), the other the linearity of reconstruction in the normalized coordinates plus the standard comparison of gap with graph-map difference on a fixed chart.
\end{proof}

The degradation of these constants as $\delta\to0$ --- equivalently, quantitative injectivity of the embedding across charts --- is open (Problem~\ref{prob:whitney}; the same mechanism governs the Whitney constants of \S\ref{sec:tubes}).

\begin{remark}[Ind-variety picture; relation to Sato]\label{rem:ind}
Coordinate truncation $P_N$ commutes with the Pl\"ucker map, and $P_NV\to V$ in gap for every $n$-plane; hence the finite Grassmannians $\Gr_n(\C^N)$ are dense and $\Gr_n(\ell^p)$ is the $\ell^p$-metric completion of the ind-variety $\varinjlim_N\Gr_n(\C^N)$. Each sequence space selects its own completion of the same classical object. This is disjoint from the Sato/restricted Grassmannian \cite{PS,SW}, which completes the \emph{semi-infinite} ind-object relative to a polarization; no polarization occurs here, and (Remark~\ref{rem:dual}) the one place a polarization would be needed --- a coordinate theory for \emph{cofinite} subspaces --- is deliberately excluded from the present theory.
\end{remark}

\section{Linear nondegeneracy}\label{sec:span}

The Pl\"ucker relations are quadratic, so nothing formally prevents the decomposables from spanning.

\begin{proposition}\label{prop:span}
The decomposable cone $C$ linearly spans a dense subspace of $\ell^p(\mathcal I_n)$ for $1\le p<\infty$ \textup{(}and of $c_0(\mathcal I_n)$\textup{)}; indeed its algebraic span contains every finitely supported sequence. Equivalently, the Pl\"ucker image lies in no closed hyperplane: the homogeneous ideal of the Grassmannian begins in degree~$2$.
\end{proposition}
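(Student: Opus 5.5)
The plan is to show directly that every basis vector $e_I$ of $\ell^p(\mathcal I_n)$ (the sequence equal to $1$ at $I$ and $0$ elsewhere) is itself a point of $C$. Take $I=\{i_1<\dots<i_n\}$ and the frame $v_r=e_{i_r}\in\ell^p$. Then $p_J=\det\big(e_{i_r}(j_s)\big)$ vanishes unless $J=I$, and $p_I=1$. So the Pl\"ucker vector of this frame is exactly $e_I$. It lies in $\ell^p(\mathcal I_n)$ by Theorem~\ref{thm:main-norm}, trivially here, and it satisfies \eqref{eq:plucker} because it comes from a frame. The relations are classical polynomial identities in the entries of the frame; for a finitely supported frame they reduce to the finite-dimensional Grassmannian case.

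Once each $e_I$ is in $C$, the algebraic span of $C$ contains every finitely supported sequence, since these are finite linear combinations of the $e_I$. Finitely supported sequences are dense in $\ell^p(\mathcal I_n)$ for $1\le p<\infty$ and in $c_0(\mathcal I_n)$, so the span of $C$ is dense.

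For the hyperplane formulation, let $\phi$ be a continuous linear functional vanishing on the Pl\"ucker image. Then $\phi$ vanishes on $C$, hence on the dense span, hence $\phi=0$. Read algebraically, no nonzero linear form in the coordinates vanishes on the Grassmannian. Therefore the homogeneous ideal has no degree-one part. Degree $2$ is attained by \eqref{eq:plucker}, and the relations are nontrivial once $n\ge2$ with at least $n+2$ indices available. The degree-$0$ part is also empty, since the ideal is proper.

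I expect no real obstacle. The only point needing care is justifying that $\omega(e_{i_1},\dots,e_{i_n})$ satisfies \eqref{eq:plucker}. I would cite the fact that frame coordinates satisfy the relations, via Laplace expansion or the finite Grassmannian theory \cite[\S9]{Fulton}. One can also check it directly: with a single nonzero coordinate, each product in \eqref{eq:plucker} requires both $R\cup\{s_t\}$ and $S\setminus\{s_t\}$ to equal $I$. For fixed $R$ and $S$ this can happen for at most one $t$, and only if $S\supset I$ with $s_t\notin I$. But then $R\cup\{s_t\}=I$ would force $s_t\in I$, a contradiction, so every term vanishes.
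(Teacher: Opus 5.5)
Your proof is correct and is essentially the paper's: the frame $e_{i_1},\dots,e_{i_n}$ places each basis vector $e_I$ in the Pl\"ucker image, so the span contains all finitely supported sequences, and no nonzero continuous functional (or linear form) vanishes on the image. There is one slip, in your optional direct check only: \eqref{eq:plucker} ranges over arbitrary tuples, and when $S$ repeats an index two terms can be nonzero and cancel by alternation (e.g.\ $n=2$, $I=\{1,2\}$, $R=(2)$, $S=(1,2,1)$, terms $t=1,3$), so ``every term vanishes'' holds only for injective $S$ --- this is harmless, since your primary citation that frame coordinates satisfy the relations covers all cases.
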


\begin{proof}
$e_{i_1}\wedge\dots\wedge e_{i_n}$ has coordinate sequence $\pm e_I$; the canonical basis vectors of the target are themselves in the image. The graded restatement: a proper closed linear span is a degree-$1$ element of the ideal, contradicting generation in degree $2$ \cite[\S9]{Fulton}.
\end{proof}

The gap between spanning and membership is the whole content of the relations: $e_1\wedge e_2+e_3\wedge e_4$ violates $p_{12}p_{34}-p_{13}p_{24}+p_{14}p_{23}=0$, so already a sum of two decomposables leaves the cone, while decomposables jointly fill the space linearly.

\section{Functoriality and duality}\label{sec:funct}

\begin{proposition}[Crossnorm; equivariance]\label{prop:funct}
The identification $\ell^p(\N^n)=\ell^p(\N;\ell^p(\N^{n-1}))$ and slicewise action give, for bounded $T_i$ on $\ell^p$,
$\|T_1\otimes\cdots\otimes T_n\|\le\prod_i\|T_i\|$ on coordinate norms; the coordinate $\ell^p$-norm is a uniform reasonable crossnorm, exact on elementary tensors: $\|(x_iy_j)\|_{\ell^p(\N^2)}=\|x\|_p\,\|y\|_p$. Consequently $\|\Lambda^nT\|\le\|T\|^n$ up to the normalization constant between subset- and tuple-indexing, and the Pl\"ucker embedding is $\GL(\ell^p)$-equivariant with quantitative control.
\end{proposition}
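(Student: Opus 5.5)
The proof will work directly on tuple-indexed arrays and pass to subsets only at the end. Identify $\ell^p(\N^n)$ with $\ell^p(\N;\ell^p(\N^{n-1}))$ by splitting off the first index. For an operator $T$ on $\ell^p$, let $T^{(1)}$ act on the first index with the other $n-1$ indices frozen, so $(T^{(1)}F)(\cdot,\vec j)=T\big(F(\cdot,\vec j)\big)$. Then
\[
\|T^{(1)}F\|_p^p=\sum_{\vec j}\|T F(\cdot,\vec j)\|_p^p\le\|T\|^p\sum_{\vec j}\|F(\cdot,\vec j)\|_p^p=\|T\|^p\|F\|_p^p .
\]
So $\|T^{(1)}\|\le\|T\|$. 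This Fubini/slicewise step is the only analytic input, and it works because the $\ell^p$ norm is an iterated $\ell^p$ norm. The same bound holds for $T^{(i)}$ acting in slot $i$.

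Since the slot actions commute, $T_1\otimes\cdots\otimes T_n=T_1^{(1)}\cdots T_n^{(n)}$, and the product bound $\prod_i\|T_i\|$ follows. Exactness on elementary tensors is the identity $\sum_{i,j}|x_i|^p|y_j|^p=\|x\|_p^p\|y\|_p^p$, and similarly for $n$ factors; this already appeared in the introduction.

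To get a reasonable crossnorm I also need the dual bound $\|\phi\otimes\psi\|\le\|\phi\|\|\psi\|$ for functionals. Identify $(\ell^p(\N^2))^*=\ell^{p'}(\N^2)$, an isometry for $1\le p<\infty$, and apply the same product identity in $\ell^{p'}$. Uniformity is the operator bound just proved.

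For $\Lambda^n T$, embed $\ell^p(\mathcal I_n)$ into $\ell^p(\N^n)$ by the alternating extension $\omega\mapsto\tilde\omega$. Each injective tuple is counted once per ordering, so $\|\tilde\omega\|_p=(n!)^{1/p}\|\omega\|_p$. The operator $T^{\otimes n}$ preserves alternating arrays, because it commutes with the permutation action on slots. Therefore
\[
\|\Lambda^nT\,\omega\|_p=(n!)^{-1/p}\|T^{\otimes n}\tilde\omega\|_p\le(n!)^{-1/p}\|T\|^n\|\tilde\omega\|_p=\|T\|^n\|\omega\|_p .
\]
So the normalization constants cancel, and what the statement calls ``up to normalization'' is in fact exact. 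One point needs care: $T^{\otimes n}\tilde\omega$ must be defined by an absolutely convergent coordinate formula. I will handle this by first taking finitely supported $\omega$, which are dense by Proposition~\ref{prop:span}, and extending by continuity.

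Equivariance means that the Plücker image of $TV$ is $\Lambda^nT$ applied to the Plücker image of $V$. For a frame, $T^{\otimes n}$ applied to the tensor $\big(\prod_r v_r(i_r)\big)$ gives $\big(\prod_r (Tv_r)(i_r)\big)$, and antisymmetrizing commutes with $T^{\otimes n}$. Quantitative control then comes from combining the norm bound above with Theorem~\ref{thm:main-norm}.

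I expect the main obstacle to be the bookkeeping of defining $\Lambda^nT$ on the completed space, that is, confirming that the slicewise composition agrees with the algebraic exterior power on dense elements. Density together with the uniform bound settles it.
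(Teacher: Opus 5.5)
Your proof is correct and follows the paper's route: slicewise action through the iterated-$\ell^p$ identification gives the tensor operator bound, exactness on elementary tensors gives the crossnorm properties, and $T^{\otimes n}$ is then restricted to alternating arrays. Your observation that the factor $(n!)^{1/p}$ appears on both sides and cancels is also right, so $\|\Lambda^nT\|\le\|T\|^n$ holds exactly on $\ell^p(\mathcal I_n)$; this slightly sharpens the paper's hedged ``up to the normalization constant.''
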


\begin{proposition}[Homogeneity]\label{prop:homog}
$\GL(\ell^p)$ acts transitively on $\Gr_n(\ell^p)$, so $\Gr_n(\ell^p)=\GL(\ell^p)/P$ for a block parabolic $P$. For $p\ne2$ the isometry group of $\ell^p$ consists of the generalized permutations \cite{Ban,Lam}, which still act transitively on $\Gr_n$ only at $p=2$; for $p\ne 2$ the Grassmannian is $\GL$-homogeneous but carries no invariant metric from the isometry group.
\end{proposition}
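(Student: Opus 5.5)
The plan is to prove the three assertions separately: transitivity of $\GL(\ell^p)$, the identification $\Gr_n(\ell^p)=\GL(\ell^p)/P$ as analytic manifolds using the charts of Theorem~\ref{thm:manifold}, and failure of transitivity for the isometry group when $p\ne2$ via an explicit invariant.

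\emph{Transitivity.} Let $V,W$ be $n$-dimensional subspaces of $\ell^p$. Both are complemented: extend a basis of the dual of $V$ by Hahn--Banach to get a bounded projection. So $\ell^p=V\oplus X=W\oplus Y$ with $X,Y$ closed of codimension $n$. The key step is $X\cong Y$. I would prove that every closed subspace of finite codimension in $\ell^p$ is isomorphic to $\ell^p$. Any two closed hyperplanes of a Banach space are isomorphic, and the coordinate hyperplane $\{x: x(1)=0\}$ is isometric to $\ell^p$ via the shift. Iterating over a flag of hyperplanes handles codimension $n$. Then $T=A\oplus B$, with $A:V\to W$ any linear bijection and $B:X\to Y$ an isomorphism, lies in $\GL(\ell^p)$ and satisfies $TV=W$. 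This step is the main obstacle only in the sense of isolating the right Banach-space fact; the rest is bookkeeping.

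\emph{Homogeneous-space structure.} Take the base point $E=E_{\{1,\dots,n\}}$. Its stabilizer $P$ consists of the operators that are block upper triangular with respect to $\ell^p=E\oplus\ell^p(\N\setminus\{1,\dots,n\})$, and $P$ is closed. To see that the bijection $\GL/P\to\Gr_n(\ell^p)$ is an analytic isomorphism, I would exhibit analytic local sections on the charts $U_{J_0}$ of Theorem~\ref{thm:manifold}(b). Let $\pi_{J_0}$ be the coordinate projection onto $E_{J_0}$. For $V=\graph(T)$, the unipotent operator $S_T=I+T\pi_{J_0}$ is invertible with inverse $I-T\pi_{J_0}$, depends affinely on $T$, and maps $E_{J_0}$ onto $V$. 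Composing with a fixed permutation operator sending $E$ to $E_{J_0}$ gives a local section $U_{J_0}\to\GL(\ell^p)$. Hence the orbit map is a locally trivial analytic principal $P$-bundle and $\Gr_n(\ell^p)\cong\GL(\ell^p)/P$.

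\emph{Isometries.} By Banach--Lamperti \cite{Ban,Lam}, for $p\ne2$ every isometry has the form $x\mapsto(\theta_k x(\pi^{-1}k))_k$ with $|\theta_k|=1$ and $\pi$ a permutation of $\N$. Such a map sends $e_{i_1}\wedge\dots\wedge e_{i_n}$ to a unimodular multiple of $e_{\pi i_1}\wedge\dots\wedge e_{\pi i_n}$. It therefore acts on Plücker coordinates by permuting $\mathcal I_n$ and multiplying by unimodular scalars. Consequently the cardinality of the support $\{I: p_I(V)\ne0\}$ is an isometry invariant; more generally, so is the multiset $\{|p_I|\}$ up to a common scale.

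This invariant separates planes. For example, $E_{\{1,\dots,n\}}$ has one nonzero coordinate, while $\operatorname{span}(e_1+e_{n+1},e_2,\dots,e_n)$ has two. So the isometry group is not transitive, and no metric obtained by transporting data at one point along isometries can exist. At $p=2$ the isometries form the unitary group, which is transitive: extend orthonormal bases of $V$ and $W$ to orthonormal bases of $\ell^2$ and match them.
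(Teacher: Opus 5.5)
Your proof is correct and more complete than the paper's. The paper's argument is a single line: any two $n$-planes have complements isomorphic to $\ell^p$ by Pe\l czy\'nski decomposition \cite{Pel}, and isomorphisms are patched blockwise. The identification with $\GL/P$ and the isometry assertions are left to the citations \cite{Ban,Lam}.

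\emph{Transitivity.} You use the same skeleton as the paper: complement, identify complements, patch $A\oplus B$. Where the paper uses Pe\l czy\'nski's decomposition method, you use the elementary fact that closed hyperplanes of a Banach space are mutually isomorphic, iterated along a flag. You can drop the shift as well. For closed subspaces $X,Y$ of the same finite codimension in any Banach space, $X\cap Y$ has the same finite codimension $k$ in each, so $X\cong(X\cap Y)\oplus\C^k\cong Y$. So $\GL$-transitivity on $\Gr_n$ holds in every Banach space, and the specific structure of $\ell^p$ matters only for the isometry half.

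\emph{Quotient structure.} Your local sections $s(V)=(I+T\pi_{J_0})\sigma$ are the same kind of bounded unipotent the paper mentions in \S\ref{ssec:isom}. They justify ``so $\Gr_n(\ell^p)=\GL(\ell^p)/P$'' as an identification of analytic manifolds rather than a bijection of sets. For the trivialization $g\mapsto\big(gE,\ s(gE)^{-1}g\big)$ you also need the orbit map $g\mapsto gE$ to be analytic in charts. This is routine: its chart coordinate is $(I-\pi_{J_0})g|_E\,(\pi_{J_0}g|_E)^{-1}$, defined where the $n\times n$ block is invertible.

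\emph{Isometries.} Letting generalized permutations act on Pl\"ucker coordinates, you get the support-cardinality invariant. This is an actual proof of non-transitivity for $p\ne2$, which the paper only asserts.

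\emph{Last clause.} Your reading of it as non-homogeneity under isometries is the only defensible one. Taken literally it would be false, since the gap metric is invariant under every surjective isometry of $\ell^p$.
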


\begin{proof}
Any two $n$-planes are complemented with complements isomorphic to $\ell^p$ (Pe\l czy\'nski decomposition \cite{Pel}); patch isomorphisms blockwise.
\end{proof}

\begin{remark}[Duality: what is true and what is not]\label{rem:dual}
The coordinate pairing gives an isometric duality $\big(\ell^p(\mathcal I_n)\big)^*=\ell^{p'}(\mathcal I_n)$, compatible with the wedge: the Pl\"ucker theory of $\ell^{p'}$ is the coordinate dual of that of $\ell^p$, with $p=2$ the self-dual point. What is \emph{not} available is a coordinate Pl\"ucker theory for the annihilator $V^\perp\subset\ell^{p'}$ itself: $V^\perp$ is cofinite-dimensional, its ``complementary minors'' are semi-infinite wedges, and these are undefined without a polarization/vacuum --- precisely the Sato structure this theory avoids. In particular, no complementary-minor (Hodge-type) coordinate realization of $V\mapsto V^\perp$ is available in this setting, appealing though the finite-dimensional analogy is. The present theory is a finite-$n$ theory, full stop.
\end{remark}

\begin{definition}[$p$-volume]\label{def:pvol}
For a frame $v_1,\dots,v_n$ define $\vol_p(v_1,\dots,v_n)=\|(p_I)\|_{\ell^p}$. A change of basis by $A\in\GL_n$ scales every $p_I$ by $\det A$, so $\vol_p$ is defined on the subspace up to scale; normalizing on an Auerbach basis of $V$ \cite[1.c]{LT1} pins it down up to constants depending only on $n$. At $p=2$, \eqref{eq:CB} identifies $\vol_2$ with Euclidean $n$-volume, constant on isometry orbits; for $p\ne2$, $\vol_p$ genuinely varies on $\Gr_n(\ell^p)$ and its extremal theory appears unexplored (Problem~\ref{prob:vol}).
\end{definition}

\section{Direct sums: grading, exponent, profile}\label{sec:sums}

Let $X=\ell^p(A)\oplus\ell^q(B)$, $A,B$ disjoint copies of $\N$, $1\le p<q<\infty$, the sum carrying any fixed absolute normalized norm (the choice affects constants and \S\ref{ssec:isom} only). Splitting frames $v_r=v_r'+v_r''$ and index sets $I=I_A\sqcup I_B$, the Laplace expansion along the split is the block decomposition of
\[
\Lambda^n(U\oplus W)=\bigoplus_{a+b=n}\Lambda^aU\otimes\Lambda^bW,
\]
so the coordinate sequence is graded by \emph{type} $(a,b)$, block $\beta_{a,b}\in\Lambda^a\ell^p(A)\otimes\Lambda^b\ell^q(B)$, a $\binom{n}{a}$-term sum of decomposables.

\begin{proposition}[Mixed-norm home; global exponent]\label{prop:maxpq}
On elementary tensors the mixed norm is exact:
$\big\|(u_Jw_K)\big\|_{\ell^p_J(\ell^q_K)}=\|u\|_p\|w\|_q$.
Each block $\beta_{a,b}$ lies in $\ell^p\big(\binom{A}{a};\ell^q\binom{B}{b}\big)$, hence in $\ell^{q}$ of its index set; globally $(p_I)\in\ell^{\max(p,q)}\binom{A\sqcup B}{n}=\ell^{q}$, and this is sharp: no $r<q$ works, even for genuinely mixed frames. \textup{(}Witness, $n=2$: $v_1=e_{a_1}$, $v_2=e_{a_2}+v_2''$ with $v_2''(j)=j^{-1/q}\log(j{+}1)^{-2/q}\in\ell^q\setminus\bigcup_{r<q}\ell^r$; then $p_{a_1 j}=\pm v_2''(j)$.\textup{)} The worse exponent always wins.
\end{proposition}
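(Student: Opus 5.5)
The proof has four pieces, taken in order: exactness of the mixed norm on elementary tensors, membership of each block in the mixed space, passage to the global exponent $q$, and sharpness.

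\textbf{Exactness on elementary tensors.} This is a direct computation. For fixed $J$, the inner $\ell^q_K$ norm of $(u_Jw_K)_K$ is $|u_J|\,\|w\|_q$. Taking the outer $\ell^p_J$ norm then gives $\|u\|_p\|w\|_q$. No antisymmetry is involved.

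\textbf{Block membership.} I would use the Laplace expansion along the split $A\sqcup B$. For $I=I_A\sqcup I_B$ with $|I_A|=a$ and $|I_B|=b$,
\[
p_I=\sum_{|S|=a}\pm\,\det\big(v'_r(I_A)\big)_{r\in S}\,\det\big(v''_r(I_B)\big)_{r\notin S},
\]
a sum over the $\binom na$ row subsets $S\subset\{1,\dots,n\}$. Each summand is an elementary tensor $u^S_Jw^S_K$ with $u^S\in\ell^p\binom Aa$ and $w^S\in\ell^q\binom Bb$, by Theorem~\ref{thm:main-norm} applied separately in $\ell^p(A)$ and in $\ell^q(B)$. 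By exactness, each summand lies in the mixed space with norm $\le C_{a,p}C_{b,q}\prod_{r\in S}\|v'_r\|_p\prod_{r\notin S}\|v''_r\|_q$. A finite sum of such terms stays in the mixed space, since it is a (quasi-)Banach space.

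\textbf{Global exponent.} Because $p<q$, the mixed space embeds contractively into $\ell^q$ of the product index set. Indeed,
\[
\sum_J\sum_K|x_{JK}|^q=\sum_J\|x_{J\cdot}\|_q^q\le\Big(\sum_J\|x_{J\cdot}\|_q^p\Big)^{q/p},
\]
using $\ell^p\subset\ell^q$ on the outer index. Hence each block lies in $\ell^q$. There are only $n+1$ types $(a,b)$, and they have disjoint index sets, so $(p_I)\in\ell^q\binom{A\sqcup B}{n}$.

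\textbf{Sharpness.} I would verify the witness. Take $n=2$, $v_1=e_{a_1}$ and $v_2=e_{a_2}+v''_2$. For $j\in B$, $p_{\{a_1,j\}}=v_1(a_1)v_2(j)-v_1(j)v_2(a_1)=v''_2(j)$, up to ordering sign, so the coordinate sequence contains $v''_2$ as a subsequence. The frame is genuinely mixed: its $(1,1)$ block is nonzero, and so is its $(2,0)$ block, since $p_{a_1a_2}=1$. It remains to show $v''_2\in\ell^q\setminus\bigcup_{r<q}\ell^r$. First, $\sum_j j^{-1}\log(j+1)^{-2}<\infty$, so $v''_2\in\ell^q$. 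For $r<q$, $|v''_2(j)|^r=j^{-r/q}\log(j+1)^{-2r/q}$, and this fails to be summable because the power $r/q<1$ dominates the logarithm. By solidity, a subsequence outside $\ell^r$ forces $(p_I)\notin\ell^r$.

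The only step needing care is keeping track of the mixed-norm ordering, with $\ell^p$ outer and $\ell^q$ inner, so that the contractive inclusion is applied in the correct direction. I expect no real obstacle beyond this.
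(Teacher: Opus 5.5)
Your proof is correct and follows the paper's route: the $\binom{n}{a}$-term Laplace expansion of each block into decomposables, exactness of the mixed norm on elementary tensors together with Theorem~\ref{thm:main-norm} applied separately in $\ell^p(A)$ and $\ell^q(B)$, the contractive inclusion $\ell^p(\ell^q)\subset\ell^q$ for $p<q$, and direct verification of the logarithmic witness. One small point: the Laplace sign can depend jointly on $(J,K)$ through the interleaving of $I_A$ and $I_B$ in the order on $A\sqcup B$, so the summands are elementary tensors only up to such signs, which is harmless because the mixed norm is solid.
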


The logarithmic factor in the witness is not decorative: no pure power works, since $j^{-\alpha}\in\ell^q$ forces $\alpha>1/q$ and then $j^{-\alpha}\in\ell^r$ for all $r>1/\alpha$, leaving the band $(1/\alpha,\,q)$ uncovered. A witness must lie in $\ell^q\setminus\bigcup_{r<q}\ell^r$, which requires the borderline decay.

Reconstruction, closedness, and the chart theorem go through verbatim (the contractions are again subsequences of $\omega$, block by block), so $\Gr_n(\ell^p\oplus\ell^q)$ is a closed Banach-analytic submanifold modeled on $(\ell^p\oplus\ell^q)^n$.

\subsection{The profile}

\begin{definition}
$\prof(V)=\{(a,b):\ \beta_{a,b}(V)\ne0\}$, and
$\alpha=\dim(V\cap\ell^p A)$, $\gamma=\dim(V\cap\ell^q B)$.
\end{definition}

\begin{theorem}[Interval theorem]\label{thm:interval}
$\prof(V)=\{(a,n-a):\ \alpha\le a\le n-\gamma\}$: an interval with no gaps, determined by the two integers $(\alpha,\gamma)$.
\end{theorem}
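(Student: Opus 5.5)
The plan is to choose a basis of $V$ adapted to the two intersections and then read off the blocks $\beta_{a,b}$ from the resulting block form of the frame. Let $V_A=V\cap\ell^pA$ and $V_B=V\cap\ell^qB$, of dimensions $\alpha$ and $\gamma$. These subspaces meet trivially, so $\alpha+\gamma\le n$. Pick a complement $W$ of $V_A\oplus V_B$ inside $V$, of dimension $m=n-\alpha-\gamma$. Take a frame $u_1,\dots,u_\alpha$ of $V_A$, $z_1,\dots,z_\gamma$ of $V_B$, and $x_1,\dots,x_m$ of $W$, and split each $x_k=x_k'+x_k''$ into its $A$- and $B$-parts. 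Since only projective data matter, we may compute $\omega(V)$ from this frame; a change of basis rescales every block by the same $\det$.

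Next we identify which projections are injective. Write $\pi_A,\pi_B$ for the coordinate projections. The key observation is that $\pi_A|_V$ has kernel exactly $V_B$, and $\pi_B|_V$ has kernel exactly $V_A$. Hence the vectors $\pi_A u_1,\dots,\pi_A u_\alpha,x_1',\dots,x_m'$ are linearly independent in $\ell^pA$, since their span is $\pi_A(V)$ of dimension $n-\gamma$. Symmetrically, $z_1,\dots,z_\gamma,x_1'',\dots,x_m''$ are independent in $\ell^qB$.

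Now expand the wedge. We have
\[
\omega(V)=u_1\wedge\cdots\wedge u_\alpha\wedge z_1\wedge\cdots\wedge z_\gamma\wedge (x_1'+x_1'')\wedge\cdots\wedge(x_m'+x_m'').
\]
Expanding multilinearly, each term selects a subset $S\subseteq\{1,\dots,m\}$ of the $x$'s that contribute their $A$-part. Up to sign, that term is
\[
\Big(u_1\wedge\cdots\wedge u_\alpha\wedge\textstyle\bigwedge_{k\in S}x_k'\Big)\otimes\Big(z_1\wedge\cdots\wedge z_\gamma\wedge\textstyle\bigwedge_{k\notin S}x_k''\Big),
\]
which has type $(\alpha+|S|,\ n-\alpha-|S|)$. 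Thus
\[
\beta_{a,n-a}=\pm\sum_{|S|=a-\alpha}\Phi_S\otimes\Psi_{S^c}.
\]
In particular, $\beta_{a,n-a}=0$ unless $\alpha\le a\le\alpha+m=n-\gamma$. This gives the inclusion $\subseteq$.

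For the reverse inclusion, we show $\beta_{a,n-a}\ne0$ for each $a$ in the interval, and this is the main point. By the independence from the second step, each $\Phi_S\ne0$ in $\Lambda^a\ell^pA$ and each $\Psi_{S^c}\ne0$ in $\Lambda^{n-a}\ell^qB$. More strongly, the family $\{\Phi_S\}_{|S|=a-\alpha}$ is linearly independent: these are wedges of distinct subsets of a single independent set, all sharing the common factor $u_1\wedge\cdots\wedge u_\alpha$, hence distinct basis elements of $\Lambda^a$ of the span. Likewise $\{\Psi_{S^c}\}$ is linearly independent. A sum $\sum_S\Phi_S\otimes\Psi_{S^c}$ with both families independent and $S\mapsto S^c$ injective is nonzero in the algebraic tensor product. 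The sum is nonempty because $0\le a-\alpha\le m$. Finally, the algebraic tensor product injects into the coordinate space, since coordinates of tensors of finitely supported-type data are computed termwise, as in the Laplace splitting above. So $\beta_{a,n-a}\ne0$.

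The only delicate step is the linear-independence claim for the projected vectors. Everything else is multilinear bookkeeping, so I expect no analytic obstacle: the whole argument is finite-dimensional inside $\pi_A(V)\oplus\pi_B(V)$.
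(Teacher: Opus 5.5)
Your argument is correct, and it takes a genuinely different route from the paper's.

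The paper never chooses an adapted basis. It works with a frame matrix $M=[M_A\mid M_B]$ and derives basis exchange for nonvanishing minors from the quadratic Pl\"ucker relations. Iterating exchange gives a discrete intermediate-value argument, which rules out gaps. Only then does it identify the endpoints as column ranks: $\operatorname{rk}M_A=\dim\pi_A(V)=n-\gamma$, realized by a Steinitz extension, and symmetrically $\alpha$ for the minimum. The kernel identification $\ker(\pi_A|_V)=V\cap\ell^qB$ is the one ingredient you share.

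Your splitting $V=V_A\oplus V_B\oplus W$ gets both inclusions in one stroke and yields more per block. You obtain $\beta_{a,n-a}=\sum_{|S|=a-\alpha}\epsilon_S\,\Phi_S\otimes\Psi_{S^c}$ with both factor families independent. So each block in the interval has tensor rank exactly $\binom{m}{a-\alpha}$ in $\Lambda^a\ell^p(A)\otimes\Lambda^{n-a}\ell^q(B)$, which sharpens the paper's count of $\binom{n}{a}$ terms. It also makes the fibration of Proposition~\ref{prop:strata}(b) transparent.

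What the paper's route buys is generality. Exchange is a consequence of the relations alone, and it carries over unchanged to $m$ summands as the M-convexity of Theorem~\ref{thm:polytope}. Your adapted basis exists only because two independent subspaces can always be split off simultaneously, and with three summands this fails. Take $V=\operatorname{span}(e_1+e_2,\,e_2+e_3)$, with $e_i$ a unit vector of the $i$-th summand. The intersections of $V$ with the three pairwise sums are the three distinct lines spanned by $e_1+e_2$, $e_2+e_3$, $e_1-e_3$, and no basis of the plane $V$ is adapted to all of them.

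Two small points to tidy.
\begin{enumerate}
\item The sign in your block formula is not global. Bringing the $x_k'$, $k\in S$, to the front costs a shuffle sign $\epsilon_S$ that depends on $S$. Your independence argument works for any nonzero coefficients, so nothing breaks.
\item The passage from the algebraic tensor product to the coordinate block should not be justified by finite support, since your vectors need not be finitely supported. The correct reason is that coordinate evaluations separate points. Hence their restrictions to the $(n-\gamma)$-dimensional span of $u_1,\dots,u_\alpha,x_1',\dots,x_m'$ contain a basis of its dual, so some $n-\gamma$ indices of $A$ keep that family independent, and likewise $n-\alpha$ indices of $B$ for the other. On those indices the minors are exactly the expansion coefficients in a basis, so the coordinate map is injective and your nonzero algebraic block has a nonzero Pl\"ucker coordinate.
\end{enumerate}
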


\begin{proof}
Represent $V$ by $M=[M_A\mid M_B]$; block $(a,b)$ is nonzero iff some nonvanishing $n\times n$ minor uses exactly $a$ columns from $A$.

\emph{Exchange, from the relations.} Let $p_I\ne0\ne p_J$ and $i\in I\setminus J$. Instantiate \eqref{eq:plucker} with $R=I\setminus\{i\}$ and $S=J\cup\{i\}$: the terms are $\pm\,p_I\,p_J$ and, for $j\in J$, $\pm\,p_{(I\setminus i)\cup j}\ p_{(J\setminus j)\cup i}$. Were every exchanged minor $p_{(I\setminus i)\cup j}$ to vanish, the relation would force $p_Ip_J=0$; so some exchange survives, and any surviving $j$ lies in $J\setminus I$ --- a value already in $I\setminus\{i\}$ would repeat a column, and $j=i$ is excluded by $i\notin J$. Base exchange for the column matroid is thus a consequence of the quadratic relations themselves; no matroid theory is imported.

\emph{Discrete intermediate value.} Iterate: each exchange replaces an element of $I\setminus J$ by an element of $J\setminus I$, strictly shrinking $I\setminus J$, so the process connects $I$ to $J$ through nonvanishing minors while changing the $A$-column count by at most one per step. Every count between the two is achieved.

\emph{Endpoints.} The largest achievable $A$-count is the rank of the $A$-columns: at most, because the $A$-columns of a nonvanishing minor are linearly independent; at least, because a maximal independent set of $A$-columns extends within the columns --- which span $\C^n$, some minor being nonvanishing --- to an independent $n$-set (Steinitz exchange), and $n$ independent columns form a nonvanishing minor. Since $\ker(\pi_A|_V)=V\cap\ell^qB$, the $A$-column rank is $n-\gamma$; symmetrically for $B$, the least achievable $A$-count is $n-(n-\alpha)=\alpha$. Hence $\prof(V)=\{(a,n-a):\ \alpha\le a\le n-\gamma\}$.
\end{proof}

\begin{proposition}[Stratification]\label{prop:strata}
Set $\Sigma_{\alpha,\gamma}=\{V:\dim(V\cap\ell^pA)=\alpha,\ \dim(V\cap\ell^qB)=\gamma\}$, indexed by the triangle $\Delta_n=\{(\alpha,\gamma)\in\Z_{\ge0}^2:\alpha+\gamma\le n\}$, $|\Delta_n|=\binom{n+2}{2}$. Then:
\begin{enumerate}[label=\textup{(\alph*)}]
\item closure order is the product order: $\overline{\Sigma_{\alpha,\gamma}}=\bigsqcup_{\alpha'\ge\alpha,\gamma'\ge\gamma}\Sigma_{\alpha',\gamma'}$ \textup{(}intersection dimension is upper semicontinuous in gap\textup{)};
\item $\Sigma_{0,0}$ is open dense; $\Sigma_{\alpha,\gamma}$ fibers over $\Gr_\alpha(\ell^pA)\times\Gr_\gamma(\ell^qB)$ with fiber the transverse $(n-\alpha-\gamma)$-planes of the quotient;
\item the extreme closed strata are the pure Grassmannians $\Sigma_{n,0}=\Gr_n(\ell^pA)$ and $\Sigma_{0,n}=\Gr_n(\ell^qB)$, of infinite codimension \textup{(}determinantal rank-drop conditions on $\pi_B|_V$, resp.\ $\pi_A|_V$\textup{)};
\item summability is stratified: the worst block on $\Sigma_{\alpha,\gamma}$ has $B$-degree $n-\alpha$, so the effective exponent improves monotonically with $\alpha$, from $q$ generically to $p$ on $\Sigma_{n,0}$.
\end{enumerate}
\end{proposition}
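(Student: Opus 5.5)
I will prove the four items separately, taking the Interval Theorem~\ref{thm:interval} as the organizing tool: $V\in\Sigma_{\alpha,\gamma}$ if and only if $\prof(V)$ is the interval $[\alpha,n-\gamma]$. So each stratum is cut out by which blocks $\beta_{a,b}$ vanish.

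\textbf{Item (a), closure order.} First I show that $V\mapsto\dim(V\cap\ell^pA)$ is upper semicontinuous in gap. Note that $\dim(V\cap\ell^pA)=n-\operatorname{rank}(\pi_B|_V)$. Take a chart of Theorem~\ref{thm:manifold}(b) with a continuously varying frame. Then $\pi_B|_V$ is represented by an $n$-column matrix depending continuously on $V$, and rank is lower semicontinuous. Hence $\overline{\Sigma_{\alpha,\gamma}}\subseteq\bigsqcup_{\alpha'\ge\alpha,\gamma'\ge\gamma}\Sigma_{\alpha',\gamma'}$.

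For the reverse inclusion I deform explicitly. Start from $V\in\Sigma_{\alpha',\gamma'}$ with $\alpha'\ge\alpha$ and $\gamma'\ge\gamma$. Choose a basis adapted to $V\cap\ell^pA$, $V\cap\ell^qB$ and a transverse remainder. Replace $\alpha'-\alpha$ of the $A$-vectors $u$ by $u+\varepsilon f$, where $f\in\ell^qB$ are fresh basis vectors lying outside $\pi_B V$. Do the same for $\gamma'-\gamma$ of the $B$-vectors, using fresh vectors in $A$. Fresh vectors exist because $\pi_AV$ and $\pi_BV$ are finite-dimensional.

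I then check the Pl\"ucker blocks of the deformed plane. The blocks of type $a$ with $\alpha\le a\le n-\gamma$ become nonzero: each is a monomial in $\varepsilon$ times a minor using the fresh columns. The blocks outside $[\alpha,n-\gamma]$ remain zero. By the Interval Theorem~\ref{thm:interval}, the deformed plane therefore lies in $\Sigma_{\alpha,\gamma}$, and it tends to $V$ as $\varepsilon\to0$. This bookkeeping is the main obstacle. The perturbation must raise the profile interval exactly to $[\alpha,n-\gamma]$, and it must not overshoot, i.e.\ not make $\alpha$ or $\gamma$ smaller than intended. I would verify this directly through the intersection dimensions, which is easier than working through the minors.

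\textbf{Item (b), density and fibration.} Openness of $\Sigma_{0,0}$ follows from (a). Its density follows from the deformation in (a) with $\alpha=\gamma=0$. For the fibration, send $V$ to the pair $(V\cap\ell^pA,\;V\cap\ell^qB)$. This map is continuous on the stratum, because the dimensions are constant there and kernels of constant-rank continuous families vary continuously. The fiber over a pair $(P,Q)$ consists of the $n$-planes containing $P\oplus Q$ with no further intersection. These correspond to $(n-\alpha-\gamma)$-planes in $X/(P\oplus Q)\cong\ell^pA/P\oplus\ell^qB/Q$ that meet neither summand, i.e.\ transverse planes. This is the stratum $\Sigma_{0,0}$ of the quotient, which is again a mixed sum by Pe\l czy\'nski, as in Proposition~\ref{prop:homog}.

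\textbf{Item (c), extreme strata.} The identifications $\Sigma_{n,0}=\Gr_n(\ell^pA)$ and $\Sigma_{0,n}=\Gr_n(\ell^qB)$ are tautological. For infinite codimension, work in a chart at a point of $\Sigma_{n,0}$ with $J_0\subset A$. There $\Sigma_{n,0}$ is the locus where all chart entries $T_{k,r}$ with $k\in B$ vanish. These are infinitely many independent coordinates of the model space $(\ell^p\oplus\ell^q)^n$, namely the coordinates of $\pi_B|_V$. The case of $\Sigma_{0,n}$ is symmetric.

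\textbf{Item (d), stratified summability.} On $\Sigma_{\alpha,\gamma}$ the Interval Theorem~\ref{thm:interval} says the nonzero block of largest $B$-degree is $b=n-\alpha$. By Proposition~\ref{prop:maxpq}, each block $\beta_{a,b}$ with $b\ge1$ lies in $\ell^p(\ell^q)$, and the exponent $q$ is attained precisely when $b\ge1$. Consequently the effective exponent is $q$ whenever $\alpha<n$, and equals $p$ on $\Sigma_{n,0}$, where only $\beta_{n,0}$ survives. The refined claim that the exponent improves monotonically with $\alpha$ would be read in this mixed-norm sense. I would check, using the witness of Proposition~\ref{prop:maxpq}, that a block of $B$-degree $b$ carries $\ell^q$-behavior in $b$ of its variables, so that the $q$-part shrinks as $\alpha$ grows.
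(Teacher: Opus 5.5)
Your proposal is correct and follows the route the paper indicates. The paper gives no separate proof, only parenthetical hints: upper semicontinuity of intersection dimension for one inclusion in (a), rank-drop conditions (your chart coordinates) for (c), and the Interval Theorem for (d). You correctly observe that in (d) the exponent is $q$ on every stratum with $\alpha<n$, so the monotonicity is just the step to $p$ on $\Sigma_{n,0}$. Your explicit deformation supplies the reverse inclusion in (a), and your quotient identification supplies the fiber in (b); the paper leaves both implicit.

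One small fix: choose the fresh vectors $f_i$ (resp.\ $g_j$) so that their span meets $\pi_BV$ (resp.\ $\pi_AV$) trivially. Having each one individually outside is not enough. With that choice, $\operatorname{rank}\pi_B|_{V_\varepsilon}=\dim\pi_BV+(\alpha'-\alpha)=n-\alpha$ exactly, symmetrically for $\gamma$, so nothing overshoots.
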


\subsection{Isometric canonicity vs.\ $\GL$-homogeneity}\label{ssec:isom}

$\GL(X)$ mixes strata (a bounded unipotent $\begin{smallmatrix}1&0\\ C&1\end{smallmatrix}$ moves pure planes to mixed) and acts transitively as in Proposition~\ref{prop:homog}. In the opposite direction:

\begin{proposition}\label{prop:isomcan}
Suppose the sum norm on $X=\ell^p(A)\oplus\ell^q(B)$ is absolute, normalized, and strictly monotone, $p\ne q$, $p,q\ne2$. Then $\Isom(X)=\Isom(\ell^pA)\times\Isom(\ell^qB)$ \textup{(}Fleming--Jamison-type decomposition \cite{FJ}; the summands are metrically distinguishable by their moduli of convexity/smoothness\textup{)}, hence every stratum $\Sigma_{\alpha,\gamma}$ is invariant under the full isometry group: the profile is a complete isometric invariant of coarse position, on a space that is nevertheless a single $\GL$-orbit.
\end{proposition}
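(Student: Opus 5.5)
The plan has two parts: first identify $\Isom(X)$, then deduce invariance of the strata. The second part is formal. Suppose every isometry $U$ of $X$ has the form $U=U_A\oplus U_B$ with $U_A\in\Isom(\ell^pA)$ and $U_B\in\Isom(\ell^qB)$. Then $U$ preserves both summands $\ell^pA$ and $\ell^qB$ setwise. Since $U$ is a linear bijection, $U(V\cap\ell^pA)=UV\cap\ell^pA$ and $U(V\cap\ell^qB)=UV\cap\ell^qB$. Hence $\alpha(UV)=\alpha(V)$ and $\gamma(UV)=\gamma(V)$, so $U\Sigma_{\alpha,\gamma}=\Sigma_{\alpha,\gamma}$. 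By Theorem~\ref{thm:interval}, $\prof(V)$ determines $(\alpha,\gamma)$ as its two endpoints and conversely, so $\prof$ is an isometric invariant. "Complete invariant of coarse position" is then the tautology that coarse position is defined as the pair $(\alpha,\gamma)$. "Single $\GL$-orbit" is Proposition~\ref{prop:homog} applied to $X\cong\ell^p\oplus\ell^q$, whose $n$-planes are all complemented with mutually isomorphic complements by the same Pe\l czy\'nski argument.

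The substantive step is the decomposition $\Isom(X)=\Isom(\ell^pA)\times\Isom(\ell^qB)$. I would run the classical Lamperti/Banach argument through disjointness preservation.

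First, since the norm $N$ is absolute and strictly monotone, the pair $(\ell^pA,\ell^qB)$ is an $\ell^N$-decomposition. I would show that $\ell^pA$ can be characterized metrically, as the closed span of those unit vectors $x$ at which $X$ has local modulus of smoothness and convexity of power type $p$ in every direction within a suitable subspace. The power type at $e_a$ is $\min(p,2)$ or $\max(p,2)$ in one direction and $q$-type in the other. Because $p\ne q$ and neither equals $2$, these moduli are distinct, so the two families of basis vectors $\{e_a\}$ and $\{e_b\}$ are separated by an isometric invariant.

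Second, I would identify the "atoms" metrically. For $p,q\ne2$, the equality cases of Clarkson-type inequalities characterize disjointness within each summand. Strict monotonicity of $N$ ensures that vectors with supports in different summands realize the norm $N(\|x\|_p,\|y\|_q)$ and no other pair does. An isometry therefore maps disjointly supported vectors to disjointly supported vectors. By the Lamperti argument, it is then a signed weighted permutation of the unit vectors. Since such a map must preserve the local power type of each unit vector, it cannot send any $e_a$ to a vector involving $B$-coordinates. Hence $U(\ell^pA)=\ell^pA$, and symmetrically $U(\ell^qB)=\ell^qB$.

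The main obstacle is this disjointness-preservation step for a general absolute norm $N$. The Lamperti equality case $\|x+y\|^r+\|x-y\|^r=2(\|x\|^r+\|y\|^r)$ is intrinsic only inside a single $\ell^r$, and one has to rule out cross-summand mixing using only the strict monotonicity of $N$. For this I would cite the Fleming--Jamison structure theorem for isometries of direct sums \cite{FJ}, which states exactly that when the summands admit no isometric mixing (guaranteed here because $p\ne q$ and the summands are non-Hilbertian), every isometry is diagonal. I would verify its hypotheses directly via the moduli-of-smoothness separation above. The hypothesis $p,q\ne2$ is essential: if one summand were $\ell^2$, its rotations would still be diagonal, but the Lamperti rigidity inside that summand would fail, and the argument should at least note why this does not break block-diagonality.
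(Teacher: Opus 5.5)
Your proposal follows the paper's route: for $\Isom(X)=\Isom(\ell^pA)\times\Isom(\ell^qB)$ the paper also offers only the Fleming--Jamison citation plus the moduli heuristic, and your formal half is correct and more explicit than the paper's. That formal half consists of $U(V\cap\ell^pA)=UV\cap\ell^pA$ for block-diagonal $U$, the endpoints $\alpha$ and $n-\gamma$ of Theorem~\ref{thm:interval}, and $\GL$-transitivity as in Proposition~\ref{prop:homog}.

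One caution: your self-contained moduli sketch does not verify anything as written, so the citation carries the whole weight. At $e_a$ the local modulus in $B$-directions is that of $t\mapsto N(1,t)$, which is fixed by the outer norm rather than by $q$; moreover, moduli at basis vectors alone do not locate $Ue_a$. Over $\C$ a cleaner check is via Hermitian operators (Kalton--Wood): for $p,q\ne2$ every Hilbert component is a singleton, so isometries are unimodular weighted permutations of the basis, and comparing the two-dimensional coordinate sections ($\ell^p_2$ inside $A$, $\ell^q_2$ inside $B$, $N$ across) forces the permutation to preserve $A$.
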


When $p=q$ the summands fuse, isometries remix, and the triangle collapses --- recovering the isometric homogeneity of the single-space case, a consistency check. The hypothesis on the norm is doing real work: the strata are linear-topological objects, indifferent to the choice of sum norm, but isometry groups see the exact norm, and the proposition is a statement about the latter.

\subsection{The torus and the weight picture}

$\phi_t=\operatorname{diag}(e^t\ \text{on }A,\ e^{-t}\ \text{on }B)\subset\GL(X)$ scales block $(a,b)$ by $e^{t(2a-n)}$: the blocks are the weight spaces, $\prof(V)$ is the weight support, and its endpoints are the ranks stabilized by the two projective flow limits $t\to\pm\infty$ (a Bia\l ynicki-Birula-type picture \cite{BB}; in each chart the limit exists because the grading is finite). Fixed points are exactly the split subspaces $V_A\oplus V_B$.

\subsection{Many summands: the profile polytope}

For $X=\bigoplus_{i=1}^m\ell^{p_i}$ (any fixed absolute norm on the sum), the grading is by compositions $\vec a=(a_1,\dots,a_m)$ of $n$, and:

\begin{theorem}[Profile polytope]\label{thm:polytope}
The support of the grading is the set of lattice points of the generalized permutohedron
\[
\mathcal P(V)=\Big\{\vec a\in\R_{\ge0}^m:\ \sum_i a_i=n,\ \ \sum_{i\in S}a_i\le\dim\pi_S(V)\ \ \forall S\subseteq[m]\Big\},
\]
$\pi_S$ the projection to $\bigoplus_{i\in S}\ell^{p_i}$; equivalently, the lower facets are cut by the intersection dimensions $\dim\big(V\cap\bigoplus_{i\in T}\ell^{p_i}\big)$. The support is M-convex in the sense of discrete convex analysis \cite{Murota}; this is the matroid base polytope of the colored column matroid \cite{GGMS}, achievability of a composition being an instance of Rado/matroid union \cite{Oxley}. For $m=2$ this degenerates to Theorem~\ref{thm:interval}. The global exponent is $\max_i p_i$ \textup{(}attained on the corresponding pure corner\textup{)} when finite; see \S\ref{sec:ext} for $\sup_ip_i=\infty$. With the $p_i$ distinct and $\ne2$ and the sum norm as in Proposition~\ref{prop:isomcan}, the entire face stratification is invariant under $\Isom(X)=\prod_i\Isom(\ell^{p_i})$.
\end{theorem}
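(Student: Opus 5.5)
We prove the profile polytope theorem by reducing it to matroid theory on the columns of a frame matrix, generalizing the two-summand proof of Theorem~\ref{thm:interval}. Represent $V$ by an $n\times\infty$ matrix $M$ whose columns are indexed by $\bigsqcup_i A_i$, where $A_i$ is the index copy of $\N$ for $\ell^{p_i}$. Color each column $i$ if it lies in $A_i$. Block $\vec a$ is nonzero exactly when some nonvanishing $n\times n$ minor uses $a_i$ columns of color $i$ for every $i$. This is just the Laplace/graded decomposition $\Lambda^n\bigoplus_i U_i=\bigoplus_{\vec a}\bigotimes_i\Lambda^{a_i}U_i$. The nonvanishing minors are the bases of the column matroid $\mathcal M$ of $M$, whose rank is $n$.

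Every infinite matrix of rank $n$ has a finite set of columns already of rank $n$ and achieving every color-restricted rank. So we may pass to a finite submatrix and work with a finite matroid. Alternatively, we note that rank functions are suprema over finite column sets, and every statement about bases involves finitely many columns.

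\textbf{Step 1 (the inequalities are necessary).} For a basis $I$ and $S\subseteq[m]$, the columns of $I$ with colors in $S$ are independent. Hence their number is at most the rank of the color-$S$ columns. That rank is $\dim\pi_S(V)$, since the row space of the color-$S$ column block is the image of $V$ under $\pi_S$.

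\textbf{Step 2 (every lattice point is achieved).} The set $\{\vec a(I): I\ \text{basis}\}$ is the image of the base set of $\mathcal M$ under the linear color-count map. A coordinate-aggregating projection of a matroid base polytope is a generalized permutohedron/polymatroid base polytope whose lattice points are all achieved. To see this, set $f(S)=r_{\mathcal M}(\bigcup_{i\in S}A_i)$; this $f$ is submodular, monotone, and satisfies $f([m])=n$. We want every integer $\vec a\ge0$ with $\sum a_i=n$ and $\sum_S a_i\le f(S)$ to be realized by an independent transversal: $a_i$ columns from color class $i$, jointly independent. Rado's theorem, in matroid-union form, gives exactly this. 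Partition the $a_i$ "slots" as copies of color class $i$; an independent transversal exists iff for every multiset of slots the rank of the union of their classes is at least the number of slots. For slots supported on colors $S$, this reads $f(S)\ge\sum_{i\in S}a_i$ after taking all slots of each color, which is the worst case. An independent set of size $n$ in a rank-$n$ matroid is a basis, so block $\vec a$ is nonzero. M-convexity and the identification with the base polytope then follow from the exchange property, which the proof of Theorem~\ref{thm:interval} derived from the relations \eqref{eq:plucker}. The equivalence with lower facets comes from $\sum_{i\in T}a_i=n-\sum_{i\notin T}a_i\ge n-\dim\pi_{T^c}(V)=\dim(V\cap\bigoplus_{i\in T}\ell^{p_i})$, using $\ker\pi_{T^c}|_V=V\cap\bigoplus_T$. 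The case $m=2$ reduces to Theorem~\ref{thm:interval} directly.

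\textbf{Step 3 (remaining claims).} For the global exponent, Proposition~\ref{prop:maxpq} applied blockwise, with mixed norms, gives membership in $\ell^{\max p_i}$. Sharpness comes from a pure-corner witness placed in the summand of maximal exponent, as in that proposition. For isometric invariance, we use the product decomposition of $\Isom(X)$, exactly as in Proposition~\ref{prop:isomcan}. Such isometries preserve each summand, hence preserve all $\dim\pi_S(V)$, and hence preserve the face stratification.

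\textbf{Main obstacle.} Step 2, Rado's sufficiency, is the main obstacle. The first attempt will simply cite Rado's theorem \cite{Oxley}. If a self-contained route is wanted, we instead argue by induction on $m$: merge colors $m-1$ and $m$, apply the two-color interval theorem within the contraction, and check that submodularity of $f$ keeps the merged constraints consistent. The Fleming--Jamison product decomposition for $m>2$ summands is taken as given.
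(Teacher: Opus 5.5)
Your proposal is correct and follows the paper's own route. The paper justifies the theorem only by a sketch: nonzero blocks are the color-count vectors of bases of the column matroid, achievability is an instance of Rado/matroid union, and $m=2$ reduces to Theorem~\ref{thm:interval}. Your finite-column reduction, your slot count in Rado's condition, and your translation $\dim\pi_{T^c}(V)=n-\dim\big(V\cap\bigoplus_{i\in T}\ell^{p_i}\big)$ correctly fill in the details that sketch omits.

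One justification should be replaced. M-convexity does not follow from the single-element exchange in the interval proof. The column $j$ that exchange produces may carry a color outside $\operatorname{supp}^-(x-y)$, while M-convexity needs a simultaneous two-sided exchange of colors. It does follow from your Steps~1--2: the lattice points of the base polyhedron of an integer-valued submodular function such as $f(S)=\dim\pi_S(V)$ form an M-convex set \cite{Murota}. Likewise, what identifies $\mathcal P(V)$ with the aggregated matroid base polytope is the integrality of that polyhedron (Edmonds' greedy argument), not exchange.
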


\section{Metric stratification: tubes around the strata}\label{sec:tubes}

The polytope is the vertex-scale skeleton; this section is the edge-scale geometry. Distances are gap distances; all comparisons are local to a fixed chart with the stated pivot bounded below, with constants depending on $n$, $m$, the exponents, and the pivot lower bound --- and \emph{degenerating} as deeper strata are approached (Problem~\ref{prob:whitney}).

\subsection{Vertex strata}

Fix a split subspace $V_0=\bigoplus_iW_i$, $\dim W_i=n_i$; its stratum is $\Sigma_{\vec n}\cong\prod_i\Gr_{n_i}(\ell^{p_i})$. Writing nearby $V$ as the graph of $\Phi=(\Phi_{ij})$, $\Phi_{ij}:W_j\to W_i^c\subseteq\ell^{p_i}$, the diagonal blocks are tangent to $\Sigma_{\vec n}$ and $\Sigma_{\vec n}=\{\Phi_{\mathrm{off}}=0\}$, so
\[
N\Sigma_{\vec n}\;=\;\bigoplus_{i\ne j}\Hom(W_j,\ \ell^{p_i}),\qquad\text{the }(j\!\to\! i)\text{ direction carrying exponent }p_i .
\]

\begin{theorem}[Vertex tube]\label{thm:tube}
In the chart, block $\vec n+\vec\delta$ \textup{(}$\sum\delta_i=0$\textup{)} is a polynomial in $\Phi$, homogeneous of degree $\ \tfrac12\|\vec\delta\|_1\ $ at lowest order in $\Phi_{\mathrm{off}}$; the degree-one blocks are the unit exchanges $\vec\delta=e_i-e_j$ and their coordinates are the entries of $\Phi_{ij}$ \textup{(}an instance of the exact crossnorm identity of Proposition~\ref{prop:maxpq}\textup{)}. Consequently
\[
\dist\big(V,\Sigma_{\vec n}\big)\;\asymp\;\|\Phi_{\mathrm{off}}\|\;\asymp\;\max_{i\ne j}\ \frac{\big\|\beta_{\vec n+e_i-e_j}(V)\big\|}{\big\|\beta_{\vec n}(V)\big\|},
\qquad
\big\|\beta_{\vec n+\vec\delta}(V)\big\|=O\big(\dist(V,\Sigma_{\vec n})^{\|\vec\delta\|_1/2}\big),
\]
the nearest stratum point being the diagonal truncation $\bigoplus_i\graph(\Phi_{ii})$. \textup{(}Lower bound: any split competitor is a graph with vanishing off-diagonal, and the off-diagonal coordinate projection is norm-nonincreasing.\textup{)}
\end{theorem}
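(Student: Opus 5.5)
Choose a frame adapted to the splitting and expand the Plücker coordinates blockwise. For each $i$ pick a basis $w_{i,1},\dots,w_{i,n_i}$ of $W_i$, normalized as in Definition~\ref{def:pvol} up to constants depending on $n$. Index $\ell^{p_i}=W_i\oplus W_i^c$ by the $n_i$ pivot coordinates $J_i$ with $p_{J_i}(W_i)\neq0$, and write $V=\graph(\Phi)$. The frame of $V$ is
\[
v_{j,s}=w_{j,s}+\sum_i \Phi_{ij}w_{j,s},
\]
so that $\Phi_{jj}$ moves $w_{j,s}$ within $\ell^{p_j}$ and $\Phi_{ij}$ for $i\neq j$ adds a component in $\ell^{p_i}$. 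Write the frame matrix with columns grouped by summand. A minor of type $\vec n+\vec\delta$ takes $n_i+\delta_i$ columns from summand $i$. Multilinear expansion in the rows then writes it as a sum over assignments of rows $(j,s)$ to summands. Assigning row $(j,s)$ to summand $i\neq j$ contributes one factor from $\Phi_{ij}$.

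\textbf{Degree count.} Suppose the type is $\vec n+\vec\delta$ and $k_{ij}$ rows of summand $j$ are assigned to summand $i\neq j$. Then $\sum_{j}k_{ij}-\sum_j k_{ji}=\delta_i$. The total off-diagonal degree $\sum_{i\neq j}k_{ij}$ is therefore at least $\sum_i\delta_i^+=\tfrac12\|\vec\delta\|_1$, and equality is achieved by a transport plan. Hence each block of type $\vec n+\vec\delta$ is a polynomial in $\Phi$ whose lowest order in $\Phi_{\mathrm{off}}$ has degree exactly $\tfrac12\|\vec\delta\|_1$.

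\textbf{Unit exchanges.} For $\vec\delta=e_i-e_j$, the degree-one part is computed in a normalized chart. Take $J_0=\bigcup_i J_i$ with $W_i$-frames equal to the identity on pivots. A one-move index replaces a pivot of $J_j$ by a $k\in\ell^{p_i}\setminus J_i$. By the chart identity of Theorem~\ref{thm:manifold}(b), that coordinate is exactly $\pm$ the entry of $\Phi_{ij}$ at $(k,r)$ divided by $p_{J_0}$, with no higher-order correction. The whole block $\beta_{\vec n+e_i-e_j}$ is this one-move sheet plus terms that also carry a diagonal perturbation. The exact crossnorm identity of Proposition~\ref{prop:maxpq}, with $\beta_{\vec n}$ an elementary tensor of the $W$-volumes up to factors close to $1$, then gives
\[
\|\beta_{\vec n+e_i-e_j}\|\asymp\|\Phi_{ij}\|\,\|\beta_{\vec n}\|.
\]
The upper bound $\|\beta_{\vec n+\vec\delta}\|=O(\|\Phi_{\mathrm{off}}\|^{\|\vec\delta\|_1/2})$ follows from the degree count. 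It also needs the boundedness estimate of Theorem~\ref{thm:main-norm} applied summand by summand, using Proposition~\ref{prop:maxpq} for the mixed home, locally on the chart.

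\textbf{Distance comparison.} The upper bound $\dist(V,\Sigma_{\vec n})\lesssim\|\Phi_{\mathrm{off}}\|$ comes from the competitor $\bigoplus_i\graph(\Phi_{ii})$, which lies in $\Sigma_{\vec n}$. The gap between two graphs over the same base is comparable to the difference of their graph maps, as in Proposition~\ref{prop:bilip}. For the lower bound, first use Theorem~\ref{thm:polytope}: a point $V'$ of $\Sigma_{\vec n}$ near $V_0$ has profile the single vertex $\vec n$, so all its off-type blocks vanish. Because $V'$ is close to $V_0$, it lies in the same chart and is a graph with $\Phi'_{\mathrm{off}}=0$. The off-diagonal coordinate projection $\Phi\mapsto\Phi_{\mathrm{off}}$ is norm-nonincreasing for the absolute sum norm. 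Therefore
\[
\|\Phi-\Phi'\|\ge\|\Phi_{\mathrm{off}}\|,
\]
and Proposition~\ref{prop:bilip} converts this to gap distance. Combining with the unit-exchange comparison gives the displayed ratio formula.

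\textbf{Main obstacle.} The hard step is making the unit-exchange comparison uniform. The lower bound $\|\beta_{\vec n+e_i-e_j}\|\gtrsim\|\Phi_{ij}\|\,\|\beta_{\vec n}\|$ requires showing that the degree-$\ge2$ and diagonal-cross terms cannot cancel the linear sheet. I would handle this by restricting to coordinates of one-move type relative to $J_0$, where the expression is exactly linear in $T$ by Theorem~\ref{thm:recon}. The ratio of $\beta_{\vec n}$ to $p_{J_0}$ must then be controlled by the pivot lower bound. This is where the constants depend on the pivot bound and degenerate near deeper strata.
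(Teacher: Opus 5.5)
Your proposal is essentially the paper's argument, which the paper gives only in outline inside the statement. That outline is: expand blocks as polynomials in $\Phi$, read the unit exchanges off as chart entries of $\Phi_{ij}$, use the diagonal truncation as the upper competitor, and get the lower bound from the vanishing off-diagonal of split competitors. Your transport-plan degree count, and your use of solidity to restrict to the one-move sheet where the coordinates are exactly linear, are the right way to fill that outline in.

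One intermediate claim is false for $m\ge3$: the per-pair comparison $\|\beta_{\vec n+e_i-e_j}\|\asymp\|\Phi_{ij}\|\,\|\beta_{\vec n}\|$. Your own degree count admits degree-two plans that route a row $j\to l\to i$. Concretely, take $n_1=n_2=n_3=1$, $v_1=e^{(1)}_1+a\,e^{(2)}_2$, $v_2=e^{(2)}_1+b\,e^{(3)}_2$, $v_3=e^{(3)}_1$. Then the block of type $(0,1,2)=\vec n+e_3-e_1$ contains the minor $-ab\neq0$, although $\Phi_{31}=0$. Per pair you only have $c\,\|\Phi_{ij}\|\le\|\beta_{\vec n+e_i-e_j}\|/\|\beta_{\vec n}\|\le C\big(\|\Phi_{ij}\|+\|\Phi_{\mathrm{off}}\|^2\big)$. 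After maximizing over $i\ne j$ this is exactly what the displayed ratio formula needs, so the theorem's conclusion is unaffected.
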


Two consequences. \emph{Anisotropy}: the tube around a stratum is metrically governed by the exponents of the summands one moves \emph{into}; at $p_i\equiv2$ this collapses to the isotropic Hilbert normal bundle, and the exponent matrix $(p_i)_{i\ne j}$ is the entire imprint of mixing. \emph{M-convexity as geometry}: the exchange axiom (leave a vertex by unit steps $e_i-e_j$) is precisely the statement that $N\Sigma_{\vec n}$ is spanned by the Pl\"ucker blocks of degree one, all higher blocks being their polynomial consequences --- the permutohedron's edges are the tube's normal directions.

\subsection{Facet strata}

A facet of $\mathcal P(V)$ corresponds to a cut $[m]=S\sqcup S^c$ with $\sum_{i\in S}a_i\le\dim\pi_S(V)$ tight; the same argument with the two coarse summands $\bigoplus_{i\in S}\ell^{p_i}$, $\bigoplus_{j\notin S}\ell^{p_j}$ yields: the blocks split into \emph{facial} (types in the face $F$) and \emph{transverse} (types crossing the cut); $\Sigma_F$ has a tubular neighborhood with normal bundle the transverse blocks, normal exponent $\max_{j\notin S}p_j$ for transfer across the cut,
\[
\dist(V,\Sigma_F)\ \asymp\ \frac{\|\beta_{\mathrm{transverse}}(V)\|}{\|\beta_{\mathrm{facial\ pivot}}(V)\|}.
\]
One genuinely non-Hilbertian subtlety: when the cut separates unequal exponents, the two opposite-transfer summands of $N\Sigma_F$ carry the \emph{different} mixed norms $\ell^{p}(\ell^{q})$ vs.\ $\ell^{q}(\ell^{p})$ (Minkowski's inequality separates them unless $p=q$); the normal bundle of a mixed stratum is not self-dual.

\section{Endpoints, extensions, general $X$, conjectures}\label{sec:ext}

\subsection{$c_0$} For frames in $c_0$, splitting into finite support plus small tail shows $(p_I)\in c_0(\mathcal I_n)$; Theorems~\ref{thm:recon} and \ref{thm:manifold} go through verbatim (the arguments need only a solid sequence space with continuous coordinates). This is the natural ceiling for $\sup_ip_i=\infty$: for $\bigoplus\ell^{p_i}$ with $p_i\to\infty$, no single $\ell^r$ contains all pure blocks; unweighted, the coordinates land in $c_0$, and with summand weights the correct target is a Marcinkiewicz/Orlicz space computable through the Calder\'on--Lozanovskii calculus \cite{Cal,Loz} --- the polytope combinatorics of Theorem~\ref{thm:polytope} is unaffected, only the target lattice changes.

\subsection{Quasi-Banach range} For $0<p<1$ everything above that does not invoke duality survives, with constant $1$ in Theorem~\ref{thm:main-norm} by $p$-subadditivity; the Grassmannian theory never needed local convexity. (The case against $\ell^{p/n}$ in \S\ref{sec:intro} is that it is the wrong \emph{space}, not that quasi-Banach targets are illegitimate; genuine $\ell^p$, $p<1$, is a perfectly good home for its own Pl\"ucker theory.)

\subsection{General sequence spaces} For a symmetric Banach sequence space $X$ with $1$-unconditional basis, the correct Pl\"ucker target is the antisymmetric part of the $n$-fold coordinate tensor power $X^{\otimes n}$ (in the symmetric-space category), computable via \cite{Cal,Loz}; concavification $X^{(n)}$ is the target of the Hadamard diagonal only. One direction is trivial: if $X$ is isometrically tensor-stable ($\|(x_iy_j)\|_{X(\N^2)}=\|x\|\,\|y\|$), then $P_n(X)=X$ as symmetric spaces, and the $\ell^p$'s are tensor-stable.

\begin{conjecture}[Tensor-stability converse]\label{conj:stable}
Among symmetric sequence spaces, the isometrically tensor-stable ones are exactly the $\ell^p$, $0<p\le\infty$ \textup{(}and $c_0$\textup{)}.
\end{conjecture}

\noindent\emph{Evidence, not proof:} the fundamental function must be multiplicative on $\N$, forcing power type $t^{1/p}$; and Lorentz spaces $\ell^{p,r}$, $r\ne p$, fail --- for $x=(k^{-1/p})$ the decreasing rearrangement of $x\otimes x$ behaves like $n^{-1/p}\log n$, exiting the isometric grip of $\ell^{p,\infty}$. A full argument must handle general symmetric norms with the given fundamental function; I do not have one, and the statement is easy to assert prematurely --- hence the label.

\section{Open problems}\label{sec:open}

\begin{enumerate}[label=\textbf{P\arabic*.},leftmargin=2.6em]
\item\label{prob:const} Sharp constants for $p>2$ in Theorem~\ref{thm:main-norm}: behavior of $C_{n,p}$ at $2^+$; asymptotics in $p$; the exact relation to the maximal determinant problem beyond the bound of \ref{it:sup}.
\item\label{prob:whitney} Whitney regularity of the stratification of \S\ref{sec:tubes}: quantify the blow-up of tube constants of $\Sigma_F$ as $V\to\Sigma_{F'}$, $F'\subsetneq F$, in terms of ratios of shallow to deep pivot blocks; equivalently, the chart-free ($\delta$-free) form of Proposition~\ref{prop:bilip}. This would upgrade the local tubes to a stratified-space structure with controlled links.
\item\label{prob:vol} Extremal theory of $\vol_p$ (Definition~\ref{def:pvol}) for $p\ne2$: extrema over $\Gr_n(\ell^p)$, critical loci, relation to Kolmogorov and Gelfand widths; $\sup$ and $\inf$ as isometric invariants of $\ell^p$.
\item Conjecture~\ref{conj:stable}.
\item Exact exterior-power norms: identify $\Lambda_\pi^n(\ell^p)$-norm vs.\ coordinate norm quantitatively for $1<p<\infty$ (only $p=1$ is rigid, Remark~\ref{rem:whichnorm}); boundedness constants of the coordinate duality on decomposables.
\item The $\sup p_i=\infty$ regime: identify the Marcinkiewicz target from the summand weights via \cite{Cal,Loz}, and the deformation of the mixed-norm interpolation exponents on the permutohedron facets.
\end{enumerate}

\section{Lean 4 formalization: results and remaining targets}\label{sec:lean}

\subsection*{What has been verified}
The finitary and single-space core of this note has been formally verified in Lean~4 against Mathlib \cite{mathlib}, in a single self-contained file (\texttt{PluckerLp.lean}, roughly $4{,}600$ lines, available at \url{https://aristotle.harmonic.fun/dashboard/requests/7a8e9b28-3eed-4c68-91b4-832dae5a7470}). Throughout, the paper's $n$ is the Lean \texttt{n+1} (the development fixes \texttt{n :\ $\N$} and treats $(n{+}1)$-dimensional planes), so Bazin's exponent $n-1$ appears as the bare \texttt{n}. Coordinates are indexed by \texttt{Finset $\N$} with \texttt{pluck v I} defined through the order isomorphism \texttt{Finset.orderIsoOfFin} when \texttt{I.card = n+1} and zero otherwise, and \texttt{tpluck v t} denotes the alternating extension to tuples.

The verified statements, with their Lean names:
\begin{enumerate}[label=\textup{(\roman*)}]
\item the determinant bound of Theorem~\ref{thm:main-norm}\ref{it:crude} at the level of $p$-th powers of truncations, with constant $(n!)^{p-1}$ (\texttt{sum\_norm\_pluck\_rpow\_le}), and its constant-one refinement for $0<p\le1$ (\texttt{sum\_norm\_pluck\_rpow\_le\_of\_le\_one});
\item the Cauchy--Binet identity \eqref{eq:CB} (\texttt{gram\_det\_eq\_sum\_star\_mul}) and the Hadamard bound on the Gram determinant (\texttt{norm\_gram\_det\_le\_prod}), hence the $p=2$ case of \ref{it:one};
\item membership $(p_I)\in\ell^p(\mathcal I_n)$ for \emph{every} $p\in[0,\infty]$ --- finite support at $p=0$, boundedness at $p=\infty$, summability in between --- from per-vector membership (\texttt{mem\ensuremath{\ell}p\_pluck});
\item the crossnorm identity of Proposition~\ref{prop:funct} (\texttt{sum\_sum\_norm\_mul\_rpow});
\item the Grassmann--Pl\"ucker relations for the coordinates of a frame (\texttt{gpRel\_tpluck});
\item Theorem~\ref{thm:recon} in both halves: the reconstruction identity, proved by the straightening induction directly over the full index set $\N$ with no truncation (\texttt{reconstruction}), and the $\ell^p$ membership of the reconstructing frame for every $p\in[0,\infty]$, by injective reindexing away from the finitely many pivot values (\texttt{mem\ensuremath{\ell}p\_reconFrame});
\item Bazin's identity with exponent $n-1$ (\texttt{bazin}), derived from reconstruction by normalization and the adjugate identity of Remark~\ref{rem:bazin}, together with the machine-checked refutation of the exponent-one variant by the explicit dimension-$3$ integer witness (\texttt{bazin\_exponent\_one\_refuted});
\item matroid basis exchange for the coordinates, derived from the Grassmann--Pl\"ucker relations alone (\texttt{exchange}): for tuples with nonvanishing coordinates, any slot of one can be filled by some value of the other, nonvanishingly;
\item stability of the cone under coordinatewise limits --- both the relations and alternation pass to limits (\texttt{gpRel\_of\_tendsto}, \texttt{isAlternating\_of\_tendsto}), the finite-polynomial core of Theorem~\ref{thm:manifold}(a);
\item the sharpness witness computation of Theorem~\ref{thm:main-norm}\ref{it:sup}, conditional on a $\pm1$ matrix of prescribed determinant modulus (\texttt{hadamard\_witness}), discharged unconditionally at every order $2^k$ by a verified Sylvester construction with $W W^{\mathsf T}=2^k I$ and $(\det W)^2=(2^k)^{2^k}$ (\texttt{sylvester\_det\_sq});
\item strong basis exchange (\texttt{exchange\_strong}) and the combinatorial core of the interval theorem (Theorem~\ref{thm:interval}): between the block-column counts of any two nonvanishing coordinates, every intermediate count is achieved by a nonvanishing coordinate, by a discrete intermediate-value induction on iterated exchanges (\texttt{profile\_interval});
\item the exponent-sharpness witness of Theorem~\ref{thm:main-norm}\ref{it:sharp-exp} for $n=2$: the power-law frame lies in $\ell^p$ while its coordinate family fails $\ell^q$ whenever $1/p<\alpha\le 1/q$ (\texttt{alphaWitness\_two\_dimensional}), and extended to every ambient dimension by a verified padding construction (\texttt{alphaWitness\_general}) --- so \ref{it:sup} and \ref{it:sharp-exp} are verified in full;
\item the achievable block-column counts form exactly an integer interval (\texttt{achievableCounts\_}\allowbreak\texttt{eq\_Icc}), packaging the profile theorem;
\item norm-closedness of the coordinate cone: the cone of families arising from alternating solutions of \eqref{eq:plucker} is closed under coordinatewise limits (\texttt{inCone\_of\_tendsto}) and hence under $\ell^p$-norm convergence for $1\le p$ (\texttt{inCone\_of\_lp\_tendsto}), via the coordinate-functional bound \texttt{lp.norm\_apply\_le\_norm}; weak sequential closedness likewise (\texttt{inCone\_of\_weak\_tendsto}), with weak convergence expressed directly as convergence under every continuous linear functional;
\item the interval theorem with identified endpoints: an independent set of frame columns extends to a nonvanishing minor (Steinitz exchange within the column family, \texttt{exists\_nonvanishing\_extension}), the extreme achievable block-column counts equal the block ranks (\texttt{max\_countA\_eq\_blockRank}, \texttt{min\_countA\_eq}), and the achievable counts are exactly $[\,(n{+}1)-\operatorname{rk}B,\ \operatorname{rk}A\,]$ (\texttt{achievableCounts\_eq\_Icc\_blockRank}) --- the frame-level content of Theorem~\ref{thm:interval} in full.
\end{enumerate}
Cauchy--Binet and Hadamard's determinant inequality are not presently in Mathlib; both are proved in full in the file (Hadamard as \texttt{hadamard\_psd\_proof}, for arbitrary positive semidefinite complex matrices). Every listed theorem, and the refutation, depends only on the standard axioms \texttt{propext}, \texttt{Classical.choice}, and \texttt{Quot.sound}, as certified by \texttt{\#print axioms}. In addition, the file declares exactly one classical input as an explicit hypothesis-lemma: finite multilinear Riesz--Thorin interpolation in the concrete instance needed for \ref{it:interp} (\texttt{classical\_multilinear\_riesz\_thorin}), isolated by \texttt{sorryAx} with a machine-checked endpoint nonvacuity certificate. The interpolation bound of \ref{it:interp} --- constant one for $1\le p\le 2$ at every truncation --- is formally derived from this input alone (\texttt{sum\_norm\_pluck\_rpow\_le\_of\_interp}): its axiom audit shows exactly the standard three plus \texttt{sorryAx} entering through that single declared input, whose discharge would therefore immediately upgrade the theorem to unconditional. The formalization was carried out with Harmonic's Aristotle system in eleven staged runs against fixed statement contracts.

\subsection*{What the formalization taught}
Three pieces of mathematical feedback deserve record. First, the correct proof of Bazin's identity is Cramer's rule: the matrix of one-move coordinates is exactly $\operatorname{adj}(A)\,B$, so the exponent $n-1$ arises structurally from $\det(\operatorname{adj}A)=(\det A)^{n-1}$; as recorded in Remark~\ref{rem:bazin}; the acceptance criterion --- that the method must not equally establish the false exponent-one variant --- is met by construction. Second, a single bijection carries all three summability arguments: $(I,\sigma)\mapsto\big(r\mapsto i_{\sigma(r)}\big)$ identifies (subset, permutation) pairs with injective tuples, the inverse recovering $\sigma$ as the inverse of the sorting permutation of the tuple; this one reindexing is the combinatorial core of \ref{it:crude}, of the $p\le1$ case, and of Cauchy--Binet alike, and in the development it is proved once and consumed three times. Third, Hadamard's inequality admits a proof with no induction on dimension and no Schur complements: factor a positive semidefinite $M$ as $B^*B$; if some diagonal entry vanishes, the corresponding column of $B$ vanishes and $\det M=0$; otherwise conjugate by $\operatorname{diag}\big(d_i^{-1/2}\big)$ to unit diagonal and apply the arithmetic--geometric mean inequality to the eigenvalues, whose sum is the trace $n$. This is the proof formalized.

Two design principles governed the commission and were vindicated: every analytic statement was reduced to a finitary core plus a solidity or monotone-convergence argument, so that the load-bearing lemmas are finite linear algebra; and statements were fixed as contracts, with only declaration names negotiable, so that each verification run either compiled the intended mathematics or halted with a report.

\subsection*{Remaining targets}
One formal target remains. Separately, the manifold statements of Theorem~\ref{thm:manifold}(b), (c) are proved in this note but deliberately unstaged: their formalization cost lies in charted Banach-analytic infrastructure rather than in the determinantal core where this project's verification effort was directed.

\begin{target}[Deferred: interpolation constant]\label{F:interp}
Discharging the declared input itself, \texttt{classical\_multilinear\_riesz\_thorin}: the conditional assembly is complete (see above), so this is now the sole gap for \ref{it:interp}. The natural route is the three-lines theorem through Mathlib's Phragm\'en--Lindel\"of infrastructure. A direct proof of constant $1$ for $1<p<2$ remains an interesting alternative (and would likely illuminate Problem~\ref{prob:const}). Absent either, the formalized bound for $1<p<2$ is the constant $(n!)^{p-1}$ of \texttt{sum\_norm\_pluck\_rpow\_le}.
\end{target}

\section*{Acknowledgments}
I thank Eric Grinberg for badgering me into writing this up.

\end{document}